\def\l{\left}
\def\r{\right}
\def\bg{\bigg}
\def\({\bg(}
\def\){\bg)}
\def\t{\text}
\def\f{\frac}
\def\eq{\equiv}
\def\Z{\mathbb Z}
\def\C{\mathbb C}
\def\N{\mathbb N}
\def\R{\mathbb R}
\def\e{\t{e}}
\def\<{\langle}
\def\>{\rangle}
\def\1{{\bf 1}}
\theoremstyle{plain}
\newtheorem{theorem}{Theorem}[section]
\newtheorem{lemma}{Lemma}[section]
\newtheorem{corollary}{Corollary}
\theoremstyle{definition}
\newtheorem*{Ack}{Acknowledgment}
\theoremstyle{remark}
\newtheorem{Rem}{Remark}[section]
\numberwithin{equation}{section}
\begin{document}
\title{Supercongruences involving binomial coefficients and Euler polynomials}
\author[Chen Wang]{Chen Wang*}
\address{Department of Applied Mathematics, Nanjing Forestry
University, Nanjing 210037, People's Republic of China}
\email{cwang@smail.nju.edu.cn}
\author[Hui-Li Han]{Hui-Li Han}
\address{Department of Applied Mathematics, Nanjing Forestry
University, Nanjing 210037, People's Republic of China}
\email{mintcrescent@163.com}

\begin{abstract}
Let $p$ be an odd prime and let $x$ be a $p$-adic integer. In this paper, we establish supercongruences for
$$
\sum_{k=0}^{p-1}\frac{\binom{x}{k}\binom{x+k}{k}(-4)^k}{(dk+1)\binom{2k}{k}}\pmod{p^2}
$$
and
$$
\sum_{k=0}^{p-1}\frac{\binom{x}{k}\binom{x+k}{k}(-2)^k}{(dk+1)\binom{2k}{k}}\pmod{p^2},
$$
where $d\in\{0,1,2\}$. As consequences, we extend some known results. For example, for $p>3$ we show
$$
\sum_{k=0}^{p-1}\binom{3k}{k}\left(\frac{4}{27}\right)^k\equiv\frac19+\frac89p+\frac{4}{27}pE_{p-2}\left(\frac13\right)\pmod{p^2},
$$
where $E_n(x)$ denotes the Euler polynomial of degree $n$. This generalizes a known congruence of Z.-W. Sun.
\end{abstract}

\subjclass[2020]{Primary 11A07; Secondary 11B65, 11B68, 05A10, 05A19}
\keywords{Supercongruences; binomial coefficients; Euler polynomials; recurrence relations}
\thanks{*Corresponding author}

\maketitle

\section{Introduction}
There is a growing interest in studying supercongruences for sums involving binomial coefficients. Especially, supercongruences involving the central binomial coefficients $\binom{2k}{k}$ were studied widely these years (see e.g., \cite{Guo1,GLS,HH2015JIS,Liu,LR,Mao,MT,Mor,SunZH2014JNT,Sun2011China,Sun2011JNT,VH,ZhangPan}). In 2010, L.-L. Zhao, H. Pan and Z.-W. Sun \cite{ZhaoPanSun2010PAMS} investigated congruences for sums involving $\binom{3k}{k}$ and proved that for any prime $p>5$, one has
$$
\sum_{k=0}^{p-1}\binom{3k}{k}2^k\eq\f{6(-1)^{(p-1)/2}-1}{5}\pmod{p}.
$$
Let $p$ be an odd prime and let $\Z_p$ denote the ring of all $p$-adic integers. In \cite{Sun2009arxiv}, for $p>3,\ x\in \Z_p$ and $d\in\Z$, Z.-W. Sun further studied $\sum_{k=0}^{p-1}\binom{3k}{k+d}x^k\pmod{p}$. In particular, he obtained
\begin{align}
\sum_{k=0}^{p-1}\binom{3k}{k}\l(\f{4}{27}\r)^k&\eq\f19\pmod{p},\label{zwsunres1}\\
\sum_{k=0}^{p-1}\binom{3k}{k+1}\l(\f{4}{27}\r)^k&\eq-\f{16}{9}\pmod{p},\label{zwsunres2}\\
\sum_{k=1}^{p-1}\binom{3k}{k-1}\l(\f{4}{27}\r)^k&\eq-\f{4}{9}\pmod{p}.\label{zwsunres3}
\end{align}
For $k\in\{0,1,\ldots,p-1\}$, it is easy to see that $p\nmid\binom{3k}{k}$ if and only if $0\leq k\leq p/3$ and $p/2<k\leq 2p/3$, and $p\nmid\binom{4k}{2k}$ if and only if $0\leq k< p/4$ and $p/2<k< 3p/4$. Inspired by these work, Z.-H. Sun \cite{SunZH2014IJNT,SunZH2015TJM,SunZH2016IJNT} systematically studied congruences for $\sum_{k=0}^{[p/3]}\binom{3k}{k}x^k$, $\sum_{k=0}^{[p/4]}\binom{4k}{2k}x^k$ and $\sum_{k=(p+1)/2}^{[3p/4]}\binom{4k}{2k}x^k$ modulo $p$, where $[a]$ denotes the integral part of $a$ and $x$ is a $p$-adic integer with $x\not\eq0\pmod p$. In 2015, Kh. Hessami Pilehrood and T. Hessami Pilehrood \cite{HH2015JIS} further investigated congruences for sums involving $\binom{3k}{k}$, $\binom{4k}{2k}$ and the sequence (cf. \cite[A176898]{Sloane})
$$
S_k=\f{\binom{6k}{3k}\binom{3k}{k}}{2(2k+1)\binom{2k}{k}},\quad k=0,1,2,\ldots.
$$
It is easy to see that
\begin{gather*}
\binom{3k}{k}=\f{\binom{-1/3}{k}\binom{-1/3+k}{k}(-27)^k}{\binom{2k}{k}},\quad \binom{4k}{2k}=\f{\binom{-1/4}{k}\binom{-1/4+k}{k}(-64)^k}{\binom{2k}{k}},\\
\f{\binom{6k}{3k}\binom{3k}{k}}{\binom{2k}{k}}=\f{\binom{-1/6}{k}\binom{-1/6+k}{k}(-432)^k}{\binom{2k}{k}},
\end{gather*}
where
$$\binom{x}{k}=\f{x(x-1)\cdots(x-k+1)}{k!}\quad (x\in\R,\ k\in\N=\{0,1,2,\ldots\})$$
are (generalized) binomial coefficients.

Motivated by the above work, in this paper, we study supercongruences for sums involving $\binom{x}{k}\binom{x+k}{k}/\binom{2k}{k}$. These supercongruences are concerned with the Euler polynomials $E_n(x)\ (n\in\N)$ defined by
$$
\f{2\e^{xz}}{\e^z+1}=\sum_{n=0}^{\infty}E_n(x)\f{z^n}{n!}\ (|z|<\pi).
$$
Equivalently,
$$
E_n(x)=\sum_{k=0}^n\binom{n}{k}\f{E_k}{2^k}\l(x-\f12\r)^n,
$$
where $E_0,E_1,\ldots,E_n$ are Euler numbers defined by
$$
E_0=1,\ \t{and}\ \sum_{\substack{k=0\\2\mid n-k}}^n\binom{n}{k}E_k=0\ \t{for}\ n=1,2,3,\ldots.
$$
The reader is referred to \cite{MOS} for some basic properties of the Euler polynomials.

Throughout the paper, for any odd prime $p$ and $x\in\Z_p$, we always use $\<x\>_p$ to denote the least nonnegative residue of $x$ modulo $p$. Write $x=\<x\>_p+pt$, where $t\in\Z_p$.

\begin{theorem}\label{2k+1-4}
Let $p$ be an odd prime and let $x$ be a $p$-adic integer. Then
\begin{equation}\label{2k+1-4eq}
(2x+1)\sum_{k=0}^{p-1}\f{\binom{x}{k}\binom{x+k}{k}(-4)^k}{(2k+1)\binom{2k}{k}}\eq(-1)^{\<x\>_p}(2t+1)-2p t(t+1)E_{p-2}(-x)\pmod{p^2}.
\end{equation}
\end{theorem}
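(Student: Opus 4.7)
Write $x = a + pt$ with $a := \<x\>_p \in \{0,1,\ldots,p-1\}$ and $t \in \Z_p$. The right-hand side of \eqref{2k+1-4eq}, using $E_{p-2}(-a-pt) \eq E_{p-2}(-a) \pmod p$ and $(pt)^j \eq 0 \pmod{p^2}$ for $j \gs 2$, reduces modulo $p^2$ to a polynomial in $t$ of degree at most two:
\[
(-1)^a + \bigl(2(-1)^a - 2p E_{p-2}(-a)\bigr) t - 2p E_{p-2}(-a)\, t^2.
\]
The plan is to expand the left-hand side $(2x+1)f(x)$ modulo $p^2$ in the same form and match coefficients of $1$, $t$, and $t^2$.

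\medskip

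\noindent\textbf{Step 1: the base case $t = 0$.} At $t = 0$ the summand $\binom{a}{k}\binom{a+k}{k}(-4)^k / ((2k+1)\binom{2k}{k})$ vanishes for $k > a$ because $\binom{a}{k} = 0$. I would first establish the exact integer identity
\[
(2n+1)\sum_{k=0}^{n}\f{\binom{n}{k}\binom{n+k}{k}(-4)^k}{(2k+1)\binom{2k}{k}} = (-1)^n \qquad (n \in \N),
\]
for instance by induction on $n$ via a Wilf--Zeilberger certificate (the recurrence $(2n+3)S_{n+1} = -(2n+1)S_n$ suffices), or equivalently via the integral $(-4)^k/((2k+1)\binom{2k}{k}) = (-1)^k\int_0^1(1-u^2)^k\,du$ combined with a Legendre-polynomial integral of the form $\int_{-1}^{1}P_n(v)/\sqrt{1+v}\,dv$. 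Specialized at $n = a$, this gives the constant-in-$t$ match as an exact equality in $\Q$.

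\medskip

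\noindent\textbf{Step 2: $pt$-expansion.} Write $a_k(x) = N_k(x)/D_k$ with $N_k(x) := \binom{x}{k}\binom{x+k}{k}(-4)^k$ a polynomial in $x$ of degree $2k$ and $D_k := (2k+1)\binom{2k}{k}$, and Taylor-expand around $x = a$:
\[
N_k(a + pt) \eq N_k(a) + pt\, N_k'(a) + \tfrac12 (pt)^2 N_k''(a) \pmod{p^3}.
\]
For $k \ls (p-3)/2$, $v_p(D_k) = 0$ and only the first two Taylor terms contribute after dividing by $D_k$ and reducing modulo $p^2$. For the \emph{anomalous} indices $k \in \{(p-1)/2\} \cup \{(p+1)/2, \ldots, p-1\}$, where $v_p(D_k) = 1$, the second-order Taylor term contributes a nontrivial $p t^2$ piece after dividing by $p$. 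Multiplying by $(2x+1) = (2a+1) + 2pt$ and collecting powers of $t$ then produces
\[
(2x+1)f(x) \eq (-1)^a + \alpha(a)\, t + \beta(a)\, t^2 \pmod{p^2}
\]
for explicit $\alpha(a), \beta(a) \in \Z_p$ built from the values $N_k(a), N_k'(a), N_k''(a)$ and from $D_k$ (or $D_k/p$ for anomalous $k$).

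\medskip

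\noindent\textbf{Step 3: matching via Euler polynomials (main obstacle).} It remains to verify
\[
\alpha(a) \eq 2(-1)^a - 2p E_{p-2}(-a) \pmod{p^2} \qquad \t{and} \qquad \beta(a) \eq -2p E_{p-2}(-a) \pmod{p^2}.
\]
I would evaluate the anomalous contributions using Lucas- and Wolstenholme-type reductions of $\binom{a}{k}$, $\binom{a+k}{k}$, and $\binom{2k}{k}/p$ modulo $p$, reorganizing the result as a weighted sum of simple fractions $\sum (-1)^k/(a + k)$, and then recognizing this sum as $-2 E_{p-2}(-a)$ through a known mod-$p$ formula of Z.-H. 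Sun type such as $E_{p-2}(y) \eq 2\sum_{k=1}^{p-1}(-1)^k/(y+k) \pmod p$. The main obstacle I foresee is the combinatorial bookkeeping of the anomalous indices---especially $k = (p-1)/2$, where $2k+1 = p$ forces $(2x+1)$ itself to vanish modulo $p$ at $x = a = (p-1)/2$ and creates an extra $1/p$ singularity in the individual summand---and verifying that all the separately singular $1/p$ contributions reassemble cleanly into the single Euler-polynomial expression on the right.
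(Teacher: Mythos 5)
Your setup is sound and genuinely different from the paper's, but the argument is incomplete: Step 3, which you yourself flag as ``the main obstacle,'' is precisely where all of the content of the theorem lives, and you only gesture at it. Everything before that point --- the exact identity $(2n+1)\sum_{k=0}^{n}\binom{n}{k}\binom{n+k}{k}(-4)^k/((2k+1)\binom{2k}{k})=(-1)^n$ (correct, and consistent with what the paper's telescoping yields at $t=0$), the observation that the left side reduces mod $p^2$ to a polynomial of degree at most $2$ in $t$, and the identification of the anomalous indices $k\in\{(p-1)/2,(p+1)/2,\dots,p-1\}$ --- is preparatory bookkeeping; the theorem is exactly the assertion that the resulting $t$- and $t^2$-coefficients equal $2(-1)^a-2pE_{p-2}(-a)$ and $-2pE_{p-2}(-a)$. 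To establish that, you must actually evaluate $\sum_k pN_k'(a)/D_k$ together with the anomalous second-order terms, show that the unit parts of the anomalous contributions assemble to $2(-1)^a$, and show that the order-$p$ parts assemble to the Euler-polynomial value; none of this is carried out. Moreover, the identity you propose to invoke at the end, $E_{p-2}(y)\eq 2\sum_{k=1}^{p-1}(-1)^k/(y+k)\pmod p$, is not correct as stated: at $y=0$ the right-hand side is $\eq-4q_p(2)$ while $E_{p-2}(0)\eq-2q_p(2)$. So even the final recognition step is not pinned down.

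For comparison, the paper avoids the anomalous-index analysis entirely by expanding in the orthogonal direction: instead of a Taylor expansion in $t$ about the integer point $a=\<x\>_p$, it proves the two-term functional equation $F_n(x)+F_n(x+1)=\f{(-1)^n4^{n+1}}{(2n+1)\binom{2n}{n}}(x+n+1)\binom{x}{n}\binom{x+n}{n}$ (Lemma \ref{2k+1-4id}), telescopes $\<x\>_p$ steps from $x$ down to $pt$, evaluates each closed-form increment modulo $p^2$ with Z.-H. Sun's expansion of $\binom{m+ps-1}{p-1}$ (Lemma \ref{ZHSunmpt}), and converts the resulting alternating harmonic sum $\sum_{k=1}^{\<x\>_p}(-1)^k/k$ into $E_{p-2}(-x)$ via Lemma \ref{ZHSunsum}; the remaining base case $x=pt$ is a Mattarei--Tauraso computation (Lemmas \ref{2kk} and \ref{Fpt}). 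If you wish to complete your route you will need to carry out the coefficient evaluation in full and supply a correct Euler-polynomial identity; otherwise the recurrence route is substantially shorter and cleaner.
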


\begin{Rem}
When $x=-\f12$,
$$
\sum_{k=0}^{p-1}\f{\binom{-\f12}{k}\binom{-\f12+k}{k}(-4)^k}{(2k+1)\binom{2k}{k}}=\sum_{k=0}^{p-1}\f{\binom{2k}{k}}{(2k+1)4^k}.
$$
By \cite[Theorem 1.1]{Sun2011JNT}, for $p>3$, we have
$$
p\sum_{k=0}^{p-1}\f{\binom{2k}{k}}{(2k+1)4^k}\eq (-1)^{\f{p-1}{2}}+p^2E_{p-3}\pmod{p^3}.
$$
\end{Rem}

Taking $x=-1/4,-1/3,-1/6$ in Theorem \ref{2k+1-4} we have the following consequences.
\begin{corollary}\label{2k+1-4cor1}
Let $p$ be an odd prime. Then
\begin{equation}\label{2k+1-4cor1eq1}
\sum_{k=0}^{p-1}\f{\binom{4k}{2k}}{(2k+1)16^k}\eq\l(\f{2}{p}\r)+\f34pE_{p-2}\l(\f14\r)\pmod{p^2},
\end{equation}
where $(\f{\cdot}{p})$ stands for the Legendre symbol.

If $p>3$, then we have
\begin{align}
\sum_{k=0}^{p-1}\f{\binom{3k}{k}}{2k+1}\l(\f{4}{27}\r)^k&\eq1+\f43pE_{p-2}\l(\f13\r)\pmod{p^2},\label{2k+1-4cor1eq2}\\
\sum_{k=0}^{p-1}\f{\binom{6k}{3k}\binom{3k}{k}}{(2k+1)108^k\binom{2k}{k}}&\eq\l(\f{3}{p}\r)+\f{5}{12}pE_{p-2}\l(\f16\r)\pmod{p^2}.\label{2k+1-4cor1eq3}
\end{align}
\end{corollary}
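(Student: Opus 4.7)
My plan is to apply Theorem \ref{2k+1-4} directly with the three choices $x = -1/4$, $x = -1/3$, and $x = -1/6$. The first task is to identify the left side of \eqref{2k+1-4eq} with the left sides of \eqref{2k+1-4cor1eq1}--\eqref{2k+1-4cor1eq3}. Using the identities recorded in the introduction that express $\binom{-1/n}{k}\binom{-1/n+k}{k}/\binom{2k}{k}$ as $\binom{4k}{2k}/(-64)^k$, $\binom{3k}{k}/(-27)^k$, and $\binom{6k}{3k}\binom{3k}{k}/((-432)^k\binom{2k}{k})$ for $n \in \{4,3,6\}$ respectively, and combining the factor $(-4)^k$ with $(-64)^k$, $(-27)^k$, $(-432)^k$, the substitution yields precisely $\binom{4k}{2k}/16^k$, $\binom{3k}{k}(4/27)^k$, and $\binom{6k}{3k}\binom{3k}{k}/(108^k\binom{2k}{k})$ as required, modulo the prefactor $2x+1 \in \{1/2,\,1/3,\,2/3\}$.

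Next, I would evaluate the right side of \eqref{2k+1-4eq} in each case. Writing $x = -1/n$ and partitioning by $p \pmod n$, the residue $\<x\>_p$ is either $(p-1)/n$ or $((n-1)p-1)/n$, with corresponding $t = -1/n$ or $t = -(n-1)/n$. In both cases one finds the symmetric value $t(t+1) = -(n-1)/n^2$, so
$$
\f{-2pt(t+1)}{2x+1}\, E_{p-2}(-x) = \f{2(n-1)}{n(n-2)}\, p\, E_{p-2}\l(\f{1}{n}\r).
$$
This evaluates to $\f{3p}{4}E_{p-2}(1/4)$, $\f{4p}{3}E_{p-2}(1/3)$, and $\f{5p}{12}E_{p-2}(1/6)$ for $n = 4,\,3,\,6$ respectively, matching the coefficients in the Corollary.

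The main step will be to show that the leading term $(-1)^{\<x\>_p}(2t+1)/(2x+1)$ simplifies as stated. Observe that $(2t+1)/(2x+1) = \pm 1$, with sign determined by the case. A case analysis modulo $8$ (for $n = 4$) and modulo $12$ (for $n = 6$), combined with the parities of $(p-1)/n$ and $((n-1)p-1)/n$, and the classical criteria $\l(\f{2}{p}\r) = 1 \Leftrightarrow p \equiv \pm 1 \pmod 8$ and $\l(\f{3}{p}\r) = 1 \Leftrightarrow p \equiv \pm 1 \pmod{12}$, will show that the signed quantity equals $\l(\f{2}{p}\r)$ for $n=4$ and $\l(\f{3}{p}\r)$ for $n=6$. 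For $n = 3$ the two contributing signs always cancel, giving the constant $1$. This sign bookkeeping is the only delicate step; everything else reduces to routine arithmetic and an appeal to Theorem \ref{2k+1-4}.
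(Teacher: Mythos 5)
Your proposal is correct and follows essentially the same route as the paper: substitute $x=-1/4,-1/3,-1/6$ into Theorem \ref{2k+1-4}, use the binomial identities from the introduction to rewrite the left-hand sides, observe that $t(t+1)=-(n-1)/n^2$ in both residue classes so the Euler-polynomial coefficient is $2(n-1)/(n(n-2))$, and finish with a sign/Legendre-symbol case analysis. The only part you leave as a sketch is the sign bookkeeping for $(-1)^{\<x\>_p}(2t+1)/(2x+1)$, which is exactly the case analysis the paper carries out (via $p\bmod 8$ for $n=4$ and quadratic reciprocity for $n=6$), so there is no gap in substance.
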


\begin{Rem}
The modulus $p$ cases of \eqref{2k+1-4cor1eq1}--\eqref{2k+1-4cor1eq3} were proved by Kh. Hessami Pilehrood and T. Hessami Pilehrood \cite[Corollaries 5, 15 and 34]{HH2015JIS} in 2015.
\end{Rem}

\begin{corollary}\label{-4k+1-4}
Let $p$ be an odd prime and let $x$ be a $p$-adic integer. Then
\begin{equation}\label{-4eq}
\sum_{k=0}^{p-1}\f{\binom{x}{k}\binom{x+k}{k}(-4)^k}{\binom{2k}{k}}\eq(-1)^{\<x\>_p}(2t+1)(2x+1)-4p t(t+1)-2p t(t+1)(2x+1)E_{p-2}(-x)\pmod{p^2}.
\end{equation}
Moreover, if $x\not\eq 0,-1\pmod{p}$, then we have
\begin{equation}\label{k+1-4eq}
2x(x+1)\sum_{k=0}^{p-2}\f{\binom{x}{k}\binom{x+k}{k}(-4)^k}{(k+1)\binom{2k}{k}}\eq\sum_{k=0}^{p-1}\f{\binom{x}{k}\binom{x+k}{k}(-4)^k}{\binom{2k}{k}}-1-\f{2p t(t+1)}{x(x+1)}\pmod{p^2}.
\end{equation}
\end{corollary}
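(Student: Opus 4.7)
\medskip
\noindent\textit{Proof sketch.} Write $T_k := \binom{x}{k}\binom{x+k}{k}(-4)^k/\binom{2k}{k}$ and set $S_1 := \sum_{k=0}^{p-1} T_k$, $S_2 := \sum_{k=0}^{p-1} T_k/(2k+1)$, and $S_3 := \sum_{k=0}^{p-2} T_k/(k+1)$. The plan for both parts is to reduce to Theorem \ref{2k+1-4} via the three-term recurrence
$$(k+1)(2k+1)\,T_{k+1} = -2(x-k)(x+k+1)\,T_k \qquad (k\geq 0),$$
which follows immediately from the definition of $T_k$. The principal technical obstacle in both parts is the delicate evaluation of $T_p \pmod p$: each of $\binom{x}{p}$ and $\binom{x+p}{p}$ has a numerator containing exactly one factor divisible by $p$, namely $x - \<x\>_p = pt$ and $p(t+1)$ respectively, while the remaining $p-1$ factors in each product contribute $(p-1)!\pmod p$ after a reindexing of residues. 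Combined with $\binom{2p}{p} \equiv 2\pmod p$ and $(-4)^p \equiv -4\pmod p$, this yields the key congruence $T_p \equiv -2t(t+1)\pmod p$.

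For \eqref{-4eq}, the telescoping identity $T_k\cdot(2x(x+1) - k)/(2k+1) = B_{k+1} - B_k$ with $B_k := -kT_k$, verified directly from the recurrence, sums to $\sum_{k=0}^{p-1} T_k(2x(x+1)-k)/(2k+1) = -pT_p$. Combining this with $(2k+1) - (2x+1)^2 = -2(2x(x+1) - k)$ yields the exact identity $S_1 - (2x+1)^2 S_2 = 2pT_p$. Since $T_p \equiv -2t(t+1)\pmod p$, we have $2pT_p \equiv -4pt(t+1)\pmod{p^2}$, and multiplying Theorem \ref{2k+1-4} through by $(2x+1)$ and substituting produces \eqref{-4eq}.

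For \eqref{k+1-4eq}, divide the recurrence by $2(k+1)$ to rewrite it as
$$\f{x(x+1)\,T_k}{k+1} = kT_k - \f{(2k+1)\,T_{k+1}}{2}.$$
Summing over $k = 0, 1, \ldots, p-2$, reindexing the second sum via $k \mapsto k-1$, and collecting boundary terms (using $T_0 = 1$) produces the exact identity $2x(x+1)\,S_3 = S_1 - 1 - 2(p-1)T_{p-1}$. It remains to compute $T_{p-1}\pmod{p^2}$: substituting $k = p-1$ in the rewritten recurrence gives $(x(x+1) - p(p-1))\,T_{p-1} = -p(2p-1)T_p/2$, and the hypothesis $x\not\equiv 0, -1\pmod p$ then allows a short calculation using $T_p \equiv -2t(t+1)\pmod p$ to extract $T_{p-1} \equiv -pt(t+1)/[x(x+1)]\pmod{p^2}$. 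Since $T_{p-1}$ is divisible by $p$, we have $2(p-1)T_{p-1} \equiv -2T_{p-1} \equiv 2pt(t+1)/[x(x+1)]\pmod{p^2}$, which gives \eqref{k+1-4eq}.
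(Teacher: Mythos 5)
Your proposal is correct and follows essentially the same route as the paper: the exact identities you obtain by telescoping, namely $S_1-(2x+1)^2S_2=2pT_p$ and $2x(x+1)S_3=S_1-1-2(p-1)T_{p-1}$, are precisely the paper's Lemma \ref{-4k+1-4id} specialized to $n=p-1$, after which both arguments evaluate the boundary term modulo $p^2$ and invoke Theorem \ref{2k+1-4}. The only cosmetic difference is that you compute the boundary terms via a direct Wilson-type evaluation $T_p\equiv-2t(t+1)\pmod p$ together with the recurrence at $k=p-1$, whereas the paper applies Z.-H. Sun's congruence for $\binom{m+ps-1}{p-1}$ (Lemma \ref{ZHSunmpt}); the two computations agree.
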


\begin{Rem}
For all $p$-adic integers $x\not\eq0,-1\pmod{p}$,
$$
\f{\binom{x}{p-1}\binom{x+p-1}{p-1}(-4)^{p-1}}{p\binom{2p-2}{p-1}}
$$
are $p$-adic integers, and we can evaluate these terms modulo $p^2$. However, the results are complicated. Therefore, in \eqref{k+1-4eq} we consider the sums over $k$ from $0$ to $p-2$ instead of the ones over $k$ from $0$ to $p-1$.
\end{Rem}

Letting $x=-1/4,-1/3,-1/6$ in Theorem \ref{2k+1-4} we have the following corollaries.

\begin{corollary}\label{-4k+1-4cor}
Let $p$ be an odd prime. Then
\begin{equation}\label{-4k+1-4coreq1}
\sum_{k=0}^{p-1}\f{\binom{4k}{2k}}{16^k}\eq\f14\l(\f{2}{p}\r)+\f34p+\f3{16}pE_{p-2}\l(\f14\r)\pmod{p^2}.
\end{equation}
Moreover, if $p>3$, then we have
\begin{align}
\sum_{k=0}^{p-2}\f{\binom{4k}{2k}}{(k+1)16^k}&\eq\f83-\f23\l(\f{2}{p}\r)+\f{10}{3}p-\f12pE_{p-2}\l(\f14\r)\pmod{p^2},\label{-4k+1-4coreq2}\\
\sum_{k=0}^{p-1}\binom{3k}{k}\l(\f{4}{27}\r)^k&\eq\f19+\f89p+\f{4}{27}pE_{p-2}\l(\f13\r)\pmod{p^2},\label{-4k+1-4coreq3}\\
\sum_{k=0}^{p-2}\f{\binom{3k}{k}}{k+1}\l(\f{4}{27}\r)^k&\eq2+\f52p-\f{1}{3}pE_{p-2}\l(\f13\r)\pmod{p^2},\label{-4k+1-4coreq4}\\
\sum_{k=0}^{p-1}\f{\binom{6k}{3k}\binom{3k}{k}}{108^k\binom{2k}{k}}&\eq\f49\l(\f{3}{p}\r)+\f59p+\f{5}{27}pE_{p-2}\l(\f16\r)\pmod{p^2}.\label{-4k+1-4coreq5}
\end{align}
If $p>5$, then we have
\begin{equation}\label{-4k+1-4coreq6}
\sum_{k=0}^{p-2}\f{\binom{6k}{3k}\binom{3k}{k}}{(k+1)108^k\binom{2k}{k}}\eq\f{18}{5}-\f85\l(\f{3}{p}\r)+\f{26}{5}p-\f{2}{3}pE_{p-2}\l(\f16\r)\pmod{p^2}.
\end{equation}
\end{corollary}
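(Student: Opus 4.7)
We specialize Corollary~\ref{-4k+1-4} at the three values $x=-1/4,-1/3,-1/6$. For each such $x$, the first task is to identify $\<x\>_p$ and the $p$-adic integer $t$ in the expansion $x=\<x\>_p+pt$; this splits into two subcases according to the residue of $p$ modulo the denominator of $x$. Concretely, for $x=-1/4$ one gets $(\<x\>_p,t)=((p-1)/4,-1/4)$ when $p\eq 1\pmod 4$ and $((3p-1)/4,-3/4)$ when $p\eq 3\pmod 4$; for $x=-1/3$ the subcases $p\eq 1,5\pmod 6$ yield $t=-1/3,-2/3$; and for $x=-1/6$ the subcases $p\eq 1,5\pmod 6$ yield $t=-1/6,-5/6$.

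A short parity check then shows that the leading term $(-1)^{\<x\>_p}(2t+1)(2x+1)$ of the right-hand side of \eqref{-4eq} collapses, in the three cases, into $\f14\l(\f2p\r)$, $\f19$, and $\f49\l(\f3p\r)$ respectively; meanwhile $t(t+1)$ is independent of the subcase, equalling $-3/16$, $-2/9$, $-5/36$. Plugging these uniform values into \eqref{-4eq} and rewriting the left-hand sides via the identities
$$\binom{4k}{2k}=\f{\binom{-1/4}{k}\binom{-1/4+k}{k}(-64)^k}{\binom{2k}{k}},\quad \binom{3k}{k}=\f{\binom{-1/3}{k}\binom{-1/3+k}{k}(-27)^k}{\binom{2k}{k}},$$
together with the analogous formula for $\binom{6k}{3k}\binom{3k}{k}/\binom{2k}{k}$ recorded in the introduction, immediately produces \eqref{-4k+1-4coreq1}, \eqref{-4k+1-4coreq3} and \eqref{-4k+1-4coreq5}.

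For the remaining three congruences we invoke \eqref{k+1-4eq}, which modulo $p^2$ expresses the $(k+1)$-denominator sum in terms of the previously evaluated one. For $x=-1/4,-1/3,-1/6$ the coefficient $2x(x+1)$ equals $-3/8,-4/9,-5/18$, all invertible modulo the relevant primes. The assumption $x\not\eq 0,-1\pmod p$ in \eqref{k+1-4eq} forces $p>3$ for \eqref{-4k+1-4coreq2} (since $-1/4\eq -1\pmod 3$), $p>3$ for \eqref{-4k+1-4coreq4} (to make $-1/3$ a $p$-adic integer), and $p>5$ for \eqref{-4k+1-4coreq6} (since $-1/6\eq -1\pmod 5$). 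Substituting the already computed data into \eqref{k+1-4eq} and simplifying produces \eqref{-4k+1-4coreq2}, \eqref{-4k+1-4coreq4}, and \eqref{-4k+1-4coreq6}.

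The main obstacle is bookkeeping rather than conceptual: one has to carry out the parity and sign analysis uniformly across all residue subcases, verifying in each instance that the apparently subcase-dependent expressions merge into the single Legendre-symbol (or constant) multipliers listed in the statement. Once these identifications are settled, everything else is routine arithmetic in $\Z_p$ and direct substitution into the formulas of Corollary~\ref{-4k+1-4}.
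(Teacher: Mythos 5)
Your proposal is correct and matches the paper's (omitted) proof exactly: the paper simply specializes Corollary \ref{-4k+1-4} at $x=-1/4,-1/3,-1/6$ and carries out the same residue-class bookkeeping as in the proof of Corollary \ref{2k+1-4cor1}. Your observation that $t(t+1)$ is subcase-independent ($-3/16,-2/9,-5/36$) and that the correction term $2pt(t+1)/(x(x+1))$ collapses to $2p$ in all three cases, together with your identification of the exact conditions $p>3$ and $p>5$, checks out numerically against all six congruences.
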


\begin{proof}
Since the proof can be proceed as the argument of Corollary \ref{2k+1-4cor1}, we omit it.
\end{proof}

\begin{Rem}
The modulus $p$ cases of \eqref{-4k+1-4coreq1} and $\eqref{-4k+1-4coreq5}$ were proved by Kh. Hessami Pilehrood and T. Hessami Pilehrood. \cite[Corollaries 5 and 34]{HH2015JIS}. \eqref{-4k+1-4coreq3} extends Z.-W. Sun's result \eqref{zwsunres1} to the modulus $p^2$ case. Note that
$$
\binom{3k}{k+1}=2\l(1-\f{1}{k+1}\r)\binom{3k}{k}\quad \t{and}\quad \binom{3k}{k-1}=\f12\l(1-\f{1}{2k+1}\r)\binom{3k}{k}.
$$
Therefore, via some combinations of Corollaries \ref{2k+1-4} and \ref{-4k+1-4cor}, we can also obtain the modulus $p^2$ extensions of \eqref{zwsunres2} and \eqref{zwsunres3}.
\end{Rem}

\begin{theorem}\label{-2}
Let $p$ be an odd prime and let $x$ be a $p$-adic integer. Then
\begin{align}\label{-2eq}
\sum_{k=0}^{p-1}\f{\binom{x}{k}\binom{x+k}{k}(-2)^k}{\binom{2k}{k}}\eq\ &(-1)^{[(\<x\>_p+1)/2]}\l(1+t-(-1)^{\<x\>_p}\l(\f{-1}{p}\r)t\r)\notag\\
&\ -\f{p t(t+1)}2\l(E_{p-2}\l(\f{x+1}{2}\r)+E_{p-2}\l(-\f{x}{2}\r)\r)\pmod{p^2}.
\end{align}
\end{theorem}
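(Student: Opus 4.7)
The plan is to first convert the given sum into a more tractable polynomial form, establish the integer-argument base case via a Chebyshev-type recurrence, and then perform a careful $p$-adic perturbation.

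First I would apply the elementary identity $\binom{x}{k}\binom{x+k}{k}=\binom{x+k}{2k}\binom{2k}{k}$ to rewrite the left-hand side of \eqref{-2eq} as
$$S(x):=\sum_{k=0}^{p-1}\binom{x+k}{2k}(-2)^k.$$
Next I would establish the integer base case: for every non-negative integer $n$,
$$\sum_{k=0}^{n}\binom{n+k}{2k}(-2)^k=(-1)^{[(n+1)/2]},$$
which follows by induction from the three-term recurrence $A_n(z)=(2+z)A_{n-1}(z)-A_{n-2}(z)$ for $A_n(z):=\sum_{k=0}^{n}\binom{n+k}{2k}z^k$; at $z=-2$ this collapses to $A_n=-A_{n-2}$ with $A_0=1$, $A_1=-1$, yielding the stated sign pattern.

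With $m=\<x\>_p$ and $x=m+pt$, I would expand $\binom{x+k}{2k}=\prod_{i=-k+1}^{k}(m+i+pt)/(2k)!$ modulo $p^2$. The factors $m+i+pt$ divisible by $p$ correspond to $i=-m$ (present when $k\ge m+1$) and $i=p-m$ (present when $k\ge p-m$); additionally $(2k)!$ contributes a factor of $p$ when $k\ge(p+1)/2$. I would split the range $0\le k\le p-1$ into subregions $S_0,S_1,S_2$ according to which of these $p$-factors appear, and expand each binomial to the appropriate order in $pt$. The regime $0\le k\le m$ produces the leading term by the base case (with $n=m$) plus a first-order $p$-harmonic correction; the regime $k>m$ with only the factor $i=-m$ active contributes a term proportional to $pt$; the regime where both singular $p$-factors are simultaneously present contributes $p\,t(t+1)$ times a residue computed via Wilson-type identities.

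After assembling all pieces, the linear-in-$pt$ corrections reduce to partial alternating sums of the form $\sum_{k}(-1)^k/(a+k)\pmod p$ with $a\in\{(x+1)/2,-x/2\}$, which via the standard congruence
$$\sum_{k=0}^{(p-3)/2}\frac{(-1)^k}{k+a}\equiv -\frac{1}{2}E_{p-2}(a)\pmod{p}$$
produce the two Euler polynomial values $E_{p-2}((x+1)/2)$ and $E_{p-2}(-x/2)$. The split into precisely these two arguments mirrors the parity split of $k$ against the alternating sign $(-1)^k$ inherent in $(-2)^k$. The Legendre symbol $(\frac{-1}{p})=(-1)^{(p-1)/2}$ enters when reindexing $k\mapsto p-1-k$ in the high-$k$ regime, since $\binom{m+k}{2k}/(2k)!$ becomes symmetric under this reflection up to a factor $(-1)^{(p-1)/2}$ once the $p$-divisible factors are extracted.

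The main obstacle will be the bookkeeping for the region $m+k\ge p$: two factors of $p$ occur in the numerator, one factor of $p$ lies in $(2k)!$, and these must be tracked together to $p^2$-precision. Combining the three regional contributions into the clean coefficient $(1+t)-(-1)^{m}(\frac{-1}{p})t$ in front of $(-1)^{[(m+1)/2]}$ requires merging the $S_1$ contributions (proportional to $t$) with the $S_2$ contributions (proportional to $t(t+1)$) and reapplying the base-case identity inside each piece; the cancellation that produces the Legendre symbol is the delicate step.
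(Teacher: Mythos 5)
Your opening reductions are correct and take a genuinely different route from the paper: the identity $\binom{x}{k}\binom{x+k}{k}/\binom{2k}{k}=\binom{x+k}{2k}$ and the evaluation $\sum_{k=0}^{n}\binom{n+k}{2k}(-2)^k=(-1)^{[(n+1)/2]}$ via the three-term recurrence both check out. The paper instead keeps the sum as $G_{p-1}(x)$, proves the two-step recurrence of Lemma \ref{th2lem1}, telescopes from $x$ down to $pt$ or $pt-1$ according to the parity of $\<x\>_p$, and evaluates those two base points with the Mattarei--Tauraso congruences of Lemma \ref{th2lem2}; the telescoping turns every intermediate step into a product of two binomials $\binom{m+ps-1}{p-1}$, which Lemma \ref{ZHSunmpt} reduces to $p^2t(t+1)/m^2$, so all the $p$-adic bookkeeping is concentrated in a single alternating harmonic-type sum. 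Your direct perturbation of $\binom{\<x\>_p+pt+k}{2k}$ must instead track three interacting sources of $p$-divisibility (numerator factors at $i=-\<x\>_p$ and $i=p-\<x\>_p$, and the factor $p$ in $(2k)!$ for $k\geq(p+1)/2$) over several $k$-ranges; this is workable in principle, but in your write-up it is announced rather than executed.

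The concrete gap is the congruence you invoke to produce the Euler polynomials: the claim
$$
\sum_{k=0}^{(p-3)/2}\frac{(-1)^k}{k+a}\equiv-\frac12E_{p-2}(a)\pmod p
$$
is false. For $p=5$ and $a=1$ the left side is $1-\frac12\equiv3\pmod5$, while $E_3(1)=-\frac14$ gives $-\frac12E_3(1)=\frac18\equiv2\pmod5$. The correct statement (Lemma \ref{ZHSunsum}, or equivalently $\sum_{j=0}^{n-1}(-1)^j(a+j)^s=\frac{E_s(a)+(-1)^{n-1}E_s(a+n)}{2}$ with $s=p-2$ and $n=(p-1)/2$) reads
$$
\sum_{k=0}^{(p-3)/2}\frac{(-1)^k}{k+a}\equiv\frac{1}{2}\left(E_{p-2}(a)-\left(\frac{-1}{p}\right)E_{p-2}\left(a-\frac12\right)\right)\pmod p,
$$
a combination of \emph{two} Euler values whose second argument is not a known constant when $a=(x+1)/2$ or $a=-x/2$. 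In the paper the clean single values $E_{p-2}(\frac{x+1}{2})$ and $E_{p-2}(-\frac x2)$ emerge only because the relevant sums are truncated at $\<x\>_p/2$ rather than at $(p-3)/2$, and because the stray endpoint contributions are $E_{p-2}(0)\equiv-2q_p(2)$ and $E_{p-2}(\frac12)=0$, which are absorbed by the base-point evaluations. So the step that is supposed to deliver the right-hand side of \eqref{-2eq} does not go through as written, and the remaining assembly --- in particular the cancellation producing $(\frac{-1}{p})$, which you yourself defer as ``the delicate step'' --- is missing. As it stands the proposal is a plausible programme, not a proof.
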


Putting $x=-1/4,-1/3,-1/6$ in Theorem \ref{-2}, we obtain the following results.
\begin{corollary}\label{-2cor} Let $p$ be an odd prime. Then
\begin{align}\label{-2coreq1}
\sum_{k=0}^{p-1}\f{\binom{4k}{2k}}{32^k}\eq&\ \f{3}{32}p\l(E_{p-2}\l(\f38\r)+E_{p-2}\l(\f18\r)\r)\notag\\
&+\begin{cases}\l(\f{-2}{p}\r)(-1)^{[p/8]}\pmod{p},\quad&\t{if}\ p\eq\pm1\pmod8,\\ \f12\l(\f{-2}{p}\r)(-1)^{[p/8]}\pmod{p},\quad&\t{if}\ p\eq\pm3\pmod8.\end{cases}
\end{align}
Moreover, if $p>3$, then we have
\begin{align}
\sum_{k=0}^{p-1}\binom{3k}{k}\l(\f{2}{27}\r)^k&\eq\f13+\f23\l(\f{3}{p}\r)+\f p9\l(E_{p-2}\l(\f13\r)+E_{p-2}\l(\f16\r)\r)\pmod{p^2},\label{-2coreq2}\\
\sum_{k=0}^{p-1}\f{\binom{6k}{3k}\binom{3k}{k}}{216^k\binom{2k}{k}}&\eq\l(\f{6}{p}\r)+\f{5p}{72}\l(E_{p-2}\l(\f5{12}\r)+E_{p-2}\l(\f1{12}\r)\r)\pmod{p^2}.\label{-2coreq3}
\end{align}
\end{corollary}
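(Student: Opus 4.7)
The plan is to specialize Theorem~\ref{-2} to $x=-1/4$, $x=-1/3$, and $x=-1/6$ in turn. First I will rewrite each left-hand side using the binomial identities displayed in the introduction: the factor $(-2)^k$ combines with $(-64)^k$, $(-27)^k$, $(-432)^k$ respectively to give $32^{-k}$, $(2/27)^k$, $216^{-k}$, so that the left-hand sides of \eqref{-2coreq1}--\eqref{-2coreq3} appear.

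Next I will determine $\<x\>_p$ and $t=(x-\<x\>_p)/p$ for each $x$, splitting on the residue of $p$ modulo $4$, $3$, and $6$ respectively. A key observation I expect to exploit is that $t(t+1)$ is the same in both residue classes in each case: a direct computation gives $t(t+1)=-3/16,\,-2/9,\,-5/36$. Together with $(x+1)/2=3/8,\,1/3,\,5/12$ and $-x/2=1/8,\,1/6,\,1/12$, this immediately produces the Euler-polynomial contributions of $3p/32$, $p/9$, $5p/72$ times $E_{p-2}((x+1)/2)+E_{p-2}(-x/2)$ appearing in the corollary.

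The final step will be to evaluate the constant factor $(-1)^{[(\<x\>_p+1)/2]}(1+t-(-1)^{\<x\>_p}\l(\f{-1}{p}\r)t)$ on each residue class of $p$---modulo $8$, $12$, and $24$ respectively---and to match the value against the stated combination of Legendre symbols and the floor $[p/8]$; the required evaluations of $\l(\f{-2}{p}\r)$, $\l(\f{3}{p}\r)$ and $\l(\f{6}{p}\r)$ come from quadratic reciprocity. The main obstacle will be the bookkeeping in this last step: for $x=-1/4$ a subtlety arises when $p\eq\pm 3\pmod 8$, where the inner bracket collapses to $\pm 1/2$ instead of $\pm 1$, and this is exactly what produces the branching in \eqref{-2coreq1}; for $x=-1/6$ all eight residue classes modulo $24$ must be checked individually to confirm that the resulting sign pattern coincides with $\l(\f{6}{p}\r)$. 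No new phenomenon appears beyond this, only a somewhat lengthy case analysis that one can carry out in parallel across the three specializations.
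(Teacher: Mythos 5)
Your proposal is correct and is exactly the paper's approach: the paper dispenses with this corollary by noting it follows from Theorem~\ref{-2} just as Corollary~\ref{2k+1-4cor1} follows from Theorem~\ref{2k+1-4}, i.e., by specializing $x=-1/4,-1/3,-1/6$, computing $\<x\>_p$ and $t$ on each residue class, and identifying the resulting signs with Legendre symbols via quadratic reciprocity. Your observations that $t(t+1)$ is independent of the residue class (giving $-3/16$, $-2/9$, $-5/36$) and that the branching in \eqref{-2coreq1} arises from the bracket collapsing to $\pm\f12$ when $p\eq\pm3\pmod 8$ are accurate, and spot-checking the case analysis confirms the sign patterns match $\l(\f{-2}{p}\r)(-1)^{[p/8]}$, $\f13+\f23\l(\f3p\r)$ and $\l(\f6p\r)$ as claimed.
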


\begin{proof}
Since the proof can be proceed as the argument of Corollary \ref{2k+1-4cor1}, we overleap it.
\end{proof}

\begin{Rem}
Corollary \ref{-2cor} in the modulus $p$ case was given by Kh. Hessami Pilehrood and T. Hessami Pilehrood. \cite[Corollaries 5, 15, and 34]{HH2015JIS}.
\end{Rem}

\begin{theorem}\label{2k+1-2}
Let $p$ be an odd prime and let $x$ be a $p$-adic integer.
\begin{align}\label{2k+1-2eq}
(2x+1)\sum_{k=0}^{p-1}\f{\binom{x}{k}\binom{x+k}{k}(-2)^k}{(2k+1)\binom{2k}{k}}\eq\ &(-1)^{[\<x\>_p/2]}\l(1+t+(-1)^{\<x\>_p}\l(\f{-1}{p}\r)t\r)\notag\\
&\ -\f{p t(t+1)}2\l(E_{p-2}\l(\f{x+1}{2}\r)+E_{p-2}\l(-\f{x}{2}\r)\r)\pmod{p^2}.
\end{align}
\end{theorem}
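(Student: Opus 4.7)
The plan is to reduce Theorem \ref{2k+1-2} to Theorem \ref{-2} via an elementary algebraic identity, and then evaluate the resulting residual sum modulo $p^2$. Since $\binom{x+k}{2k+1}=(x-k)\binom{x+k}{2k}/(2k+1)$ and $2x+1 = 2(x-k)+(2k+1)$, we get
$$\f{(2x+1)\binom{x+k}{2k}}{2k+1} = \binom{x+k}{2k} + 2\binom{x+k}{2k+1}.$$
Combined with the identity $\binom{x}{k}\binom{x+k}{k}/\binom{2k}{k}=\binom{x+k}{2k}$, summing against $(-2)^k$ rewrites the left-hand side of \eqref{2k+1-2eq} as $S+U$, where
$$S:=\sum_{k=0}^{p-1}\binom{x+k}{2k}(-2)^k\qquad\t{and}\qquad U:=2\sum_{k=0}^{p-1}\binom{x+k}{2k+1}(-2)^k.$$
Theorem \ref{-2} evaluates $S$ modulo $p^2$. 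Since the Euler-polynomial contributions to the right-hand sides of \eqref{-2eq} and \eqref{2k+1-2eq} are identical, they cancel on subtraction, and writing $n:=\<x\>_p$, $t:=(x-n)/p$, the remaining task is to show
$$U\eq \begin{cases}2(-1)^{(n-1)/2}(1+t) & n\ \t{odd},\\ 2(-1)^{n/2}\l(\f{-1}{p}\r)t & n\ \t{even},\end{cases} \pmod{p^2}.$$

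For the base case $t=0$, observe that $\binom{n+k}{2k+1}=0$ for $k\gs n$, so $U\big|_{t=0}=2\sum_{k=0}^{n-1}\binom{n+k}{2k+1}(-2)^k$. An iteration of Pascal's identity yields $\binom{n+k}{2k+1}=\sum_{m=0}^{n-1}\binom{m+k}{2k}$, and the closed form $\sum_{k=0}^{m}\binom{m+k}{2k}(-2)^k=(-1)^{[(m+1)/2]}$ (which is precisely the $t=0$ case of Theorem \ref{-2}, and can also be derived from the Chebyshev-type generating function $\sum_{m\gs 0}x^m\sum_{k\gs 0}\binom{m+k}{2k}z^k=(1-x)/(1-(2+z)x+x^2)$ at $z=-2$, whose associated roots are $\pm i$) gives $U\big|_{t=0}=2\sum_{m=0}^{n-1}(-1)^{[(m+1)/2]}$. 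The partial sums of $(-1)^{[(m+1)/2]}$ are periodic of period~$4$ and evaluate to $(-1)^{(n-1)/2}$ for odd $n$ and $0$ for even $n$, matching the claimed main term exactly.

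For the linear $pt$-correction, expand $\binom{x+k}{2k+1}=\prod_{i=-k}^{k}(x+i)/(2k+1)!$ about $x=n$. Three regimes arise: (a) for $k\ls n-1$ with $n+k<p$, a regular Taylor piece $\binom{n+k}{2k+1}\cdot pt\cdot H_{n,k}$ with $H_{n,k}:=\sum_{i=-k}^{k}1/(n+i)$; (b) for $k\gs n$, the factor $x+(-n)=pt$ forces the binomial to be $O(pt)$ with an explicit leading coefficient $(-1)^{k-n}(k-n-1)!(n+k)!/(2k+1)!$ after removing that factor; and (c) when $n\gs(p+1)/2$, an overlap regime where an additional factor $x+(p-n)\eq p\pmod{p^2}$ appears, producing the quadratic character $\l(\f{-1}{p}\r)$ through Wilson-type congruences such as $((p-1)/2)!^2\eq (-1)^{(p+1)/2}\pmod p$. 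The main obstacle is the final bookkeeping: consolidating these three regimes into closed form and verifying that they collapse to the prescribed multiple of $pt$. Crucially, because the Euler-polynomial part of the answer has already been absorbed into Theorem \ref{-2}, no $E_{p-2}$ values resurface in $U$, and the remaining verification reduces to standard Wolstenholme-type harmonic-sum congruences and Fermat-quotient manipulations.
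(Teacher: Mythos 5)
Your decomposition is correct and attractive: since $\binom{x}{k}\binom{x+k}{k}/\binom{2k}{k}=\binom{x+k}{2k}$ and $(2x+1)\binom{x+k}{2k}/(2k+1)=\binom{x+k}{2k}+2\binom{x+k}{2k+1}$, the left-hand side of \eqref{2k+1-2eq} really is the sum of Theorem \ref{-2} plus $U=2\sum_{k=0}^{p-1}\binom{x+k}{2k+1}(-2)^k$, and your $t=0$ evaluation of $U$ via the hockey-stick identity is fine. The proposal nevertheless fails at its pivotal claim: the Euler-polynomial contributions do \emph{not} cancel, so your target congruence for $U$ is false. Concretely, for $p=3$ and $x=-\tfrac12$ (so $\langle x\rangle_p=1$, $t=-\tfrac12$) one computes $U=2\bigl(-\tfrac12-\tfrac18-\tfrac{3}{64}\bigr)=-\tfrac{43}{32}\equiv 4\pmod 9$, whereas your formula predicts $2(1+t)=1$; the discrepancy is exactly $pt(t+1)E_{p-2}\bigl(\tfrac{x+1}{2}\bigr)=\tfrac{3}{16}\equiv 3\pmod 9$. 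The true congruence is $U\equiv M+pt(t+1)E_{p-2}\bigl(\tfrac{x+1}{2}\bigr)\pmod{p^2}$ with $M$ your main term. The root cause is that the printed statement \eqref{2k+1-2eq} carries a sign slip --- the paper's own derivation at the end of Section 4 and Corollary \ref{2k+1-2cor} yield $E_{p-2}\bigl(-\tfrac x2\bigr)-E_{p-2}\bigl(\tfrac{x+1}{2}\bigr)$ rather than the displayed sum (one can check numerically that the version with the difference is the correct one) --- and your cancellation argument inherits that slip: subtracting the correct right-hand sides of \eqref{-2eq} and \eqref{2k+1-2eq} leaves precisely the $E_{p-2}\bigl(\tfrac{x+1}{2}\bigr)$ term above. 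Your $t=0$ check could not detect this because the offending term vanishes at $t=0$.

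Beyond this, the portion of the argument carrying all of the difficulty --- the evaluation of $U$ for general $t$ --- is only sketched. Your three regimes are the right inventory, but regime (a) produces an alternating sum $\sum_k(-1)^k H_{n,k}$-type expression whose value modulo $p$ is an Euler-polynomial value (this is exactly what Lemma \ref{ZHSunsum} delivers), so the promised disappearance of all $E_{p-2}$ values cannot occur; and regimes (b)--(c), where $(2k+1)!$ acquires a factor of $p$ once $k\ge(p-1)/2$ while the numerator acquires factors $pt$ and $p(t+1)$, are precisely where the Legendre symbol and the unit-size multiple of $t$ must be extracted --- none of this bookkeeping is carried out. For comparison, the paper proves the theorem directly from the two-step recurrence $H_{p-1}(x)+H_{p-1}(x+2)=\cdots$ (Lemma \ref{th3lem1}), telescoping down to $H_{p-1}(pt)$ or $H_{p-1}(pt-1)$ (evaluated via Mattarei--Tauraso congruences in Lemma \ref{th3lem3}) and handling the telescoped sum with Lemmas \ref{ZHSunmpt} and \ref{ZHSunsum}; your residual sum $U$ would require essentially the same apparatus, so the reduction to Theorem \ref{-2}, while valid and worth keeping, does not bypass the hard part and, as currently formulated, aims at a false intermediate statement.
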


Taking $x=-1/4,-1/3,-1/6$ in Theorem \ref{2k+1-2}, we get the following congruences.

\begin{corollary}\label{2k+1-2cor}
Let $p$ be an odd prime. Then
\begin{align}\label{2k+1-2coreq1}
\sum_{k=0}^{p-1}\f{\binom{4k}{2k}}{(2k+1)32^k}\eq&\ \f{3}{16}p\l(E_{p-2}\l(\f18\r)-E_{p-2}\l(\f38\r)\r)\notag\\
&+\begin{cases}\l(\f{-1}{p}\r)(-1)^{[p/8]}\pmod{p},\quad&\t{if}\ p\eq\pm1\pmod8,\\ 2\l(\f{-1}{p}\r)(-1)^{[p/8]}\pmod{p},\quad&\t{if}\ p\eq\pm3\pmod8.\end{cases}
\end{align}
Moreover, for $p>3$ we have
\begin{align}
\sum_{k=0}^{p-1}\f{\binom{3k}{k}}{2k+1}\l(\f{2}{27}\r)^k&\eq-1+2\l(\f{3}{p}\r)+\f p3\l(E_{p-2}\l(\f16\r)-E_{p-2}\l(\f13\r)\r)\pmod{p^2},\label{2k+1-2coreq2}\\
\sum_{k=0}^{p-1}\f{\binom{6k}{3k}\binom{3k}{k}}{(2k+1)216^k\binom{2k}{k}}&\eq\l(\f{2}{p}\r)+\f{5p}{48}\l(E_{p-2}\l(\f1{12}\r)-E_{p-2}\l(\f5{12}\r)\r)\pmod{p^2}.\label{2k+1-2coreq3}
\end{align}
\end{corollary}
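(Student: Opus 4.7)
The plan is to substitute $x=-1/4,-1/3,-1/6$ into Theorem \ref{2k+1-2} and simplify, mirroring the argument used to derive Corollary \ref{2k+1-4cor1} from Theorem \ref{2k+1-4}. The first step is to convert the summand: using the three identities expressing $\binom{4k}{2k}$, $\binom{3k}{k}$, and $\binom{6k}{3k}\binom{3k}{k}/\binom{2k}{k}$ in terms of $\binom{x}{k}\binom{x+k}{k}/\binom{2k}{k}$ that are recorded in the introduction, together with the relations $(-2)^k/(-64)^k=32^{-k}$, $(-2)^k/(-27)^k=(2/27)^k$, and $(-2)^k/(-432)^k=216^{-k}$, the left-hand side of \eqref{2k+1-2eq} with these choices of $x$ becomes $(2x+1)$ times the three sums appearing in \eqref{2k+1-2coreq1}--\eqref{2k+1-2coreq3}, with $(2x+1)=1/2,1/3,2/3$ respectively.

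Next, I would identify $\<x\>_p$ and $t$ in each arithmetic case. For $x=-1/4$, one has $\<x\>_p=(p-1)/4$ when $p\eq1\pmod4$ and $\<x\>_p=(3p-1)/4$ when $p\eq3\pmod4$, giving $t=-1/4$ or $t=-3/4$ and, crucially, the same value $t(t+1)=-3/16$ in both cases. Likewise $t(t+1)=-2/9$ for $x=-1/3$ (with $\<x\>_p=(p-1)/3$ or $(2p-1)/3$), and $t(t+1)=-5/36$ for $x=-1/6$ (with $\<x\>_p=(p-1)/6$ or $(5p-1)/6$). Moreover $(x+1)/2$ and $-x/2$ evaluate to $3/8,1/8$, to $1/3,1/6$, and to $5/12,1/12$ respectively, matching the Euler polynomial arguments displayed in the corollary. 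The fact that $t(t+1)$ is invariant across the residue cases is what allows the Euler coefficient $-pt(t+1)/2$ to appear as a single constant once we divide by $(2x+1)$.

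The final step is to simplify the constant part $(-1)^{[\<x\>_p/2]}(1+t+(-1)^{\<x\>_p}(\f{-1}{p})t)$ and then divide through by $(2x+1)$. This requires a finer case split---respectively on $p\pmod 8$, on $p\pmod{12}$, and on $p\pmod{24}$---so that both the parity of $[\<x\>_p/2]$ and the value of $(\f{-1}{p})$ are simultaneously determined. In each case the resulting sign reassembles into a Legendre symbol ($(\f{-1}{p})$, $(\f{3}{p})$, or $(\f{2}{p})$) times a factor of the form $(-1)^{[p/N]}$ for an appropriate $N$, producing the piecewise constant terms stated in the corollary.

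The main obstacle is the bookkeeping of $(-1)^{[\<x\>_p/2]}$ together with the relevant Legendre symbol: for $x=-1/4$ in particular one must verify that $p\eq\pm1\pmod 8$ and $p\eq\pm3\pmod 8$ produce the prefactors $1$ and $2$ displayed in \eqref{2k+1-2coreq1}, with analogous checks for $x=-1/3,-1/6$ requiring quadratic reciprocity identities for $(\f{3}{p})$ and $(\f{2}{p})$. However, each individual case is a finite computation entirely parallel to what is carried out for Corollary \ref{2k+1-4cor1}, so no new ideas are needed and the details can be omitted.
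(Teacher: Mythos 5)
Your plan is exactly the paper's: the authors dispose of this corollary with the single remark that it ``can be proceed as the argument of Corollary \ref{2k+1-4cor1}'', and your substitutions $x=-1/4,-1/3,-1/6$, the conversion of the summands, the values $t(t+1)=-3/16,-2/9,-5/36$, and the finer residue splits modulo $8$, $12$, $24$ for the constant term are precisely what that omitted argument consists of. I checked the constant-term bookkeeping for $x=-1/4$ and $x=-1/3$ and it does reassemble into the stated Legendre-symbol prefactors.

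There is, however, one point you glide over that would derail the computation as you describe it. Equation \eqref{2k+1-2eq} as printed carries the Euler-polynomial term $-\f{pt(t+1)}2\l(E_{p-2}\l(\f{x+1}{2}\r)+E_{p-2}\l(-\f{x}{2}\r)\r)$, a \emph{sum}, whereas the corollary asserts a \emph{difference}: for $x=-1/4$ one has $-x/2=1/8$ and $(x+1)/2=3/8$, and \eqref{2k+1-2coreq1} contains $E_{p-2}(1/8)-E_{p-2}(3/8)$. Substituting into \eqref{2k+1-2eq} literally and dividing by $2x+1$, as you propose, yields $\f{3}{16}p\l(E_{p-2}(3/8)+E_{p-2}(1/8)\r)$, which is not the stated right-hand side (and likewise for the other two congruences). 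The resolution is that the displayed form of Theorem \ref{2k+1-2} contains a sign slip: its own proof in Section 4 establishes the term $-\f{pt(t+1)}2\l(E_{p-2}\l(-\f{x}{2}\r)-E_{p-2}\l(\f{x+1}{2}\r)\r)$, and it is this corrected form that produces \eqref{2k+1-2coreq1}--\eqref{2k+1-2coreq3}. Your statement that the Euler-polynomial arguments ``match'' is true of the arguments but not of their signs; a complete writeup must work from the form actually proved (or record the correction to \eqref{2k+1-2eq}). With that single adjustment the rest of your outline goes through.
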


\begin{proof}
Since the proof can be proceed as the argument of Corollary \ref{2k+1-4cor1}, we overleap it.
\end{proof}

\begin{Rem}
Corollary \ref{2k+1-2cor} in the modulus $p$ case was given by Kh. Hessami Pilehrood and T. Hessami Pilehrood. \cite[Corollaries 5, 15, and 34]{HH2015JIS}.
\end{Rem}

We shall prove Theorem \ref{2k+1-4} and its corollaries in the next section. Theorems \ref{-2} and \ref{2k+1-2} will be shown in Sections 3 and 4, respectively.

\medskip

\section{Proofs of Theorem \ref{2k+1-4} and Corollaries \ref{2k+1-4cor1} and \ref{-4k+1-4}}
For $n\in\N$ and $x\in\C$, set
$$
F_n(x)=\sum_{k=0}^n\f{(2x+1)\binom{x}{k}\binom{x+k}{k}(-4)^k}{(2k+1)\binom{2k}{k}}.
$$
\begin{lemma}\label{2k+1-4id}
For $n\in\N$ and $x\in\C$, we have
\begin{equation}\label{2k+1-4ideq}
F_n(x)+F_n(x+1)=\f{(-1)^n4^{n+1}}{(2n+1)\binom{2n}{n}}(x+n+1)\binom{x}{n}\binom{x+n}{n}.
\end{equation}
\end{lemma}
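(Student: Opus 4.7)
\medskip

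\noindent\textbf{Proof plan.} The plan is to prove the identity by induction on $n$. For the base case $n=0$, one has $F_0(x)=2x+1$, so $F_0(x)+F_0(x+1)=4(x+1)$, which matches the right-hand side of \eqref{2k+1-4ideq}. For the inductive step, denote the $k$-th summand of $F_n(x)$ by
$$
a_k(x)=\f{(2x+1)\binom{x}{k}\binom{x+k}{k}(-4)^k}{(2k+1)\binom{2k}{k}},
$$
and let $G(n,x)$ denote the right-hand side of \eqref{2k+1-4ideq}. Since $F_n(x)-F_{n-1}(x)=a_n(x)$, the inductive step reduces to the single-term identity
$$
a_n(x)+a_n(x+1)=G(n,x)-G(n-1,x),
$$
which is purely algebraic.

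To verify this, I would use the standard shift relations
$$
\binom{x+1}{n}=\f{x+1}{x-n+1}\binom{x}{n},\quad \binom{x+n+1}{n}=\f{x+n+1}{x+1}\binom{x+n}{n},
$$
on the left-hand side, and
$$
\binom{x}{n-1}=\f{n}{x-n+1}\binom{x}{n},\quad \binom{x+n-1}{n-1}=\f{n}{x+n}\binom{x+n}{n},\quad \binom{2n-2}{n-1}=\f{n}{2(2n-1)}\binom{2n}{n},
$$
on the right-hand side, so as to factor out the common factor $\binom{x}{n}\binom{x+n}{n}\cdot(-1)^n4^n/\binom{2n}{n}$ from both sides. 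What remains is then a rational identity whose numerators are
$$
(2x+1)(x-n+1)+(2x+3)(x+n+1)\quad\t{and}\quad 4(x+n+1)(x-n+1)+2n(2n+1),
$$
respectively; both simplify to $4(x+1)^2+2n$, and the denominators $(2n+1)(x-n+1)$ match.

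I expect no serious obstacle; the whole argument is bookkeeping of binomial shifts plus an elementary polynomial simplification. The cleanest organizing observation is that the quantity $4(x+1)^2+2n$ appears naturally on both sides, which is what forces the two telescoping increments to agree. Assembling these pieces gives the inductive step and hence \eqref{2k+1-4ideq}.
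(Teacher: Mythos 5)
Your proposal is correct and follows essentially the same route as the paper: both reduce the identity to checking that the increments $a_n(x)+a_n(x+1)$ and $G(n,x)-G(n-1,x)$ agree (the paper writes this as $S_n-S_{n-1}=T_n-T_{n-1}$) together with the base case $4(x+1)$ at $n=0$, and both arrive at the same common factor with numerator $4(x+1)^2+2n=4x^2+8x+2n+4$ over the denominator $(2n+1)(x+1-n)$.
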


\begin{proof}
Denote the left-hand side of \eqref{2k+1-4ideq} by $S_n$ and the right-hand side of \eqref{2k+1-4ideq} by $T_n$. For $n\geq1$, it is easy to see that
\begin{align*}
S_n-S_{n-1}&=F_{n}(x)-F_{n-1}(x)+F_n(x+1)-F_{n-1}(x+1)\\
&=\f{(2x+1)\binom{x}{n}\binom{x+n}{n}(-4)^n}{(2n+1)\binom{2n}{n}}+\f{(2x+3)\binom{x+1}{n}\binom{x+1+n}{n}(-4)^n}{(2n+1)\binom{2n}{n}}\\
&=\l(2x+1+(2x+3)\f{x+1+n}{x+1-n}\r)\f{\binom{x}{n}\binom{x+n}{n}(-4)^n}{(2n+1)\binom{2n}{n}}\\
&=\f{4x^2+8x+2n+4}{x+1-n}\f{\binom{x}{n}\binom{x+n}{n}(-4)^n}{(2n+1)\binom{2n}{n}}
\end{align*}
and
\begin{align*}
T_n-T_{n-1}&=\l(4x+4n+4+\f{2n(2n+1)}{x+1-n}\r)\f{\binom{x}{n}\binom{x+n}{n}(-4)^n}{(2n+1)\binom{2n}{n}}\\
&=\f{4x^2+8x+2n+4}{x+1-n}\f{\binom{x}{n}\binom{x+n}{n}(-4)^n}{(2n+1)\binom{2n}{n}}.
\end{align*}
Clearly, $S_0=T_0=4x+4$. Therefore, $S_n=T_n$ for $n\in\N$. This concludes the proof.
\end{proof}

\begin{lemma}\label{2kk}
For any odd prime $p$, we have
\begin{align}
p\sum_{k=1}^{p-1}\f{4^k}{k\binom{2k}{k}}&\eq-2+2p-4pq_p(2)\pmod{p^2},\label{MTeq1}\\
p\sum_{k=1}^{p-1}\f{4^k}{k^2\binom{2k}{k}}&\eq-4q_p(2)-2pq_p(2)^2\pmod{p^2},\label{MTeq2}\\
p\sum_{k=1}^{p-1}\f{4^k}{(2k+1)\binom{2k}{k}}&\eq0\pmod{p^2},\label{MTcor}
\end{align}
where $q_p(a)=(a^{p-1}-1)/p$ is the Fermat quotient for any $p$-adic integer $a$ with $a\not\eq0\pmod{p}$.
\end{lemma}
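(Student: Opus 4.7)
The plan is to dispatch \eqref{MTcor} by a short telescoping argument together with the congruence $\binom{2p}{p}\eq 2\pmod{p^2}$, and to establish \eqref{MTeq1} and \eqref{MTeq2} via the classical reduction $\f{4^k}{\binom{2k}{k}}=\f{(-1)^k}{\binom{-1/2}{k}}$ followed by a $p$-adic expansion of $\binom{-1/2}{k}$ around $\binom{(p-1)/2}{k}$.

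For \eqref{MTcor}, set $c_k=\f{4^k}{(2k+1)\binom{2k}{k}}$; a direct check gives the recurrence $(2k+1)c_k=2kc_{k-1}$ for $k\geq 1$. Summing from $k=1$ to $p-1$, reindexing $k\mapsto k-1$ on the right-hand side, and collecting terms telescopes to
\[
\sum_{k=1}^{p-1}c_k=2pc_{p-1}-2.
\]
Multiplying by $p$ yields $2p^2c_{p-1}-2p$. The identity $\binom{2p-2}{p-1}=\f{p}{2(2p-1)}\binom{2p}{p}$ combined with Wolstenholme's congruence $\binom{2p}{p}\eq 2\pmod{p^2}$ gives $\binom{2p-2}{p-1}\eq -p\pmod{p^2}$, whence $pc_{p-1}\eq 1\pmod{p}$. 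Therefore $2p^2c_{p-1}\eq 2p\pmod{p^2}$, and \eqref{MTcor} follows.

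For \eqref{MTeq1} and \eqref{MTeq2}, use $-\f12=\f{p-1}{2}-\f{p}{2}$ to write the ratio $\binom{-1/2}{k}/\binom{(p-1)/2}{k}$ as $\prod_{j=0}^{k-1}\f{-(2j+1)}{p-(2j+1)}$; for $1\leq k\leq (p-1)/2$ expanding each factor modulo $p^2$ yields
\[
\binom{-1/2}{k}\eq\binom{(p-1)/2}{k}\l(1+p\sum_{j=1}^{k}\f{1}{2j-1}\r)\pmod{p^2},
\]
while for $(p+1)/2\leq k\leq p-1$ the singular factor $-p/2$ at $j=(p-1)/2$ is isolated and the same expansion applied to the remaining product. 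Substituting into the sums of \eqref{MTeq1} and \eqref{MTeq2} and splitting each at $k=(p-1)/2$, the resulting inner sums are reduced by the identity $k\binom{(p-1)/2}{k}=\f{p-1}{2}\binom{(p-3)/2}{k-1}$ and then evaluated via Wolstenholme's theorem, Morley's congruence $\binom{p-1}{(p-1)/2}\eq (-4)^{(p-1)/2}\pmod{p^3}$, the expansion $2^{2(p-1)}\eq 1+2pq_p(2)\pmod{p^2}$, and the classical congruence $\sum_{k=1}^{(p-1)/2}\f{1}{k}\eq -2q_p(2)\pmod{p}$, producing the stated right-hand sides.

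The main obstacle is extracting the leading behavior of the upper range $(p+1)/2\leq k\leq p-1$, where $\binom{2k}{k}$ has $p$-adic valuation $1$: these terms dominate the sums and, together with the Morley-theorem expansion of the boundary binomial $\binom{p-1}{(p-1)/2}$, are precisely what produce the Fermat-quotient contributions $-4q_p(2)$ and $-2pq_p(2)^2$ appearing in \eqref{MTeq1} and \eqref{MTeq2}.
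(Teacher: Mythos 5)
Your treatment of \eqref{MTcor} is correct and is actually a nicer route than the paper's: the recurrence $(2k+1)c_k=2kc_{k-1}$ does telescope to the exact identity $\sum_{k=1}^{p-1}c_k=2pc_{p-1}-2$, and then $pc_{p-1}\equiv 1\pmod p$ (from $(2p-1)\binom{2p-2}{p-1}=\tfrac{p}{2}\binom{2p}{p}$ and $\binom{2p}{p}\equiv 2\pmod{p^2}$) finishes it. Unlike the paper, which obtains \eqref{MTcor} by reindexing the sum into $\tfrac12\sum_{k=2}^{p}\frac{4^k}{k\binom{2k}{k}}$ and then invoking \eqref{MTeq1}, your argument is self-contained and does not depend on the other two congruences.

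The genuine gap is in \eqref{MTeq1} and \eqref{MTeq2}, which carry essentially all the content of the lemma. The paper does not prove them either: it quotes them as the case $t=4$ of Mattarei--Tauraso \cite[Theorem 6.1]{MT} (checking $p=3$ by hand). Your outline correctly identifies the reduction $\frac{4^k}{\binom{2k}{k}}=\frac{(-1)^k}{\binom{-1/2}{k}}$ and the expansion $\binom{-1/2}{k}\equiv\binom{(p-1)/2}{k}\bigl(1+p\sum_{j=1}^{k}\frac{1}{2j-1}\bigr)\pmod{p^2}$ for $k\leq(p-1)/2$, and correctly locates the difficulty in the range $(p+1)/2\leq k\leq p-1$, where the single factor $-p/2$ makes $p/(k\binom{-1/2}{k})$ a $p$-adic unit. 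But at exactly that point the proof stops: to get \eqref{MTeq1} and \eqref{MTeq2} modulo $p^2$ you must evaluate those unit terms modulo $p^2$, i.e.\ compute $\binom{-1/2}{k}/p$ modulo $p^2$ for each such $k$ and then sum the resulting reciprocals --- this is a genuinely delicate calculation (it is the substance of \cite[Theorem 6.1]{MT}), and asserting that Wolstenholme, Morley's congruence and $\sum_{k=1}^{(p-1)/2}\frac1k\equiv-2q_p(2)\pmod p$ will ``produce the stated right-hand sides'' is a plausibility claim, not a derivation. (A smaller slip: the identity you quote, $k\binom{(p-1)/2}{k}=\frac{p-1}{2}\binom{(p-3)/2}{k-1}$, is the absorption identity for a binomial in the numerator, whereas your sums have $\binom{(p-1)/2}{k}$ in the denominator; the relevant form is $\frac{1}{k\binom{n}{k}}=\frac{1}{n\binom{n-1}{k-1}}$.) As it stands, the hardest two-thirds of the lemma is sketched rather than proved; either carry out the upper-range computation in full or, as the paper does, cite \cite[Theorem 6.1]{MT}.
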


\begin{proof}
For $p=3$, one can directly check these congruences. Now we assume $p>3$. Taking $t=4$ in \cite[Theorem 6.1]{MT}, we immediately obtain \eqref{MTeq1} and \eqref{MTeq2}. Clearly,
\begin{align*}
p\sum_{k=1}^{p-1}\f{4^k}{(2k+1)\binom{2k}{k}}&=\f12p\sum_{k=1}^{p-1}\f{\cdot4^{k+1}}{(k+1)\binom{2k+2}{k+1}}=\f12p\sum_{k=2}^p\f{4^k}{k\binom{2k}{k}}\\
&=\f12p\sum_{k=1}^{p-1}\f{4^k}{k\binom{2k}{k}}+\f12\f{4^p}{\binom{2p}{p}}-p.
\end{align*}
With the help of \eqref{MTeq1} and the fact $\binom{2p}{p}\eq2\pmod{p^2}$ (cf. e.g., \cite[p. 380]{Robert00}), we arrive at \eqref{MTcor}.
\end{proof}

\begin{Rem}
Z.-W. Sun \cite[Conjecture 1.1]{Sun2011China} conjectured the modulus $p^3$ extension of \eqref{MTeq2}, and later this conjecture was confirmed by S. Mattarei and R. Tauraso \cite{MT}.
\end{Rem}

\begin{lemma}\label{Fpt}
For any odd prime $p$, we have
$$
F_{p-1}(pt)\eq 2t+1+4p t(t+1)q_p(2)\eq 2t+1-2p t(t+1)E_{p-2}(-pt)\pmod{p^2}.
$$
\end{lemma}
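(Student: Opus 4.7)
The plan is to compute $F_{p-1}(pt)$ directly modulo $p^2$. Using the polynomial identity $\binom{x}{k}\binom{x+k}{k}=\binom{2k}{k}\binom{x+k}{2k}$, we reformulate
$$F_{p-1}(pt)=(2pt+1)\sum_{k=0}^{p-1}\f{\binom{pt+k}{2k}(-4)^k}{2k+1}.$$
For $k\ge1$, $\binom{pt+k}{2k}$ vanishes at $pt=0$, so it factors as $pt\cdot g_k(pt)$ for a polynomial $g_k$. Expanding $g_k(pt)$ to first order in $pt$ via the standard expansions of $\prod_{j=1}^k(pt+j)$ and $\prod_{j=1}^{k-1}(pt-j)$ modulo $p^2$, and simplifying with the telescoping $\sum_{j=1}^k 1/j-\sum_{j=1}^{k-1}1/j=1/k$, yields
$$\binom{pt+k}{2k}\eq \f{(-1)^{k-1}pt}{k\binom{2k}{k}}+\f{(-1)^{k-1}p^2t^2}{k^2\binom{2k}{k}}\pmod{p^2}.$$

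Substituting this into the reformulated sum gives
$$F_{p-1}(pt)\eq(2pt+1)\left[1-pt\,S_1-p^2t^2\,S_2\right]\pmod{p^2},$$
where $S_1=\sum_{k=1}^{p-1}\f{4^k}{k(2k+1)\binom{2k}{k}}$ and $S_2=\sum_{k=1}^{p-1}\f{4^k}{k^2(2k+1)\binom{2k}{k}}$. I would then use the partial fraction decompositions $\f{1}{k(2k+1)}=\f{1}{k}-\f{2}{2k+1}$ and $\f{1}{k^2(2k+1)}=\f{1}{k^2}-\f{2}{k}+\f{4}{2k+1}$ to rewrite $pS_1$ and $p^2S_2$ as linear combinations of the three sums in Lemma \ref{2kk}, reading off $pS_1\eq-2+2p-4pq_p(2)$ and $p^2S_2\eq 4p-4pq_p(2)\pmod{p^2}$.

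Next, I would expand $(2pt+1)[1-pt\,S_1-p^2t^2\,S_2]$ modulo $p^2$ and collect terms to obtain $F_{p-1}(pt)\eq 2t+1+4pt(t+1)q_p(2)\pmod{p^2}$. The second form of the congruence then follows from the classical identity $E_n(0)=-\f{2(2^{n+1}-1)B_{n+1}}{n+1}$ (for $n\ge 1$) combined with von Staudt--Clausen's relation $pB_{p-1}\eq-1\pmod p$, which together give $E_{p-2}(-pt)\eq E_{p-2}(0)\eq -2q_p(2)\pmod p$; multiplying by $-2pt(t+1)$ (which is $O(p)$) converts this into the $4pt(t+1)q_p(2)$ form.

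The main obstacle is careful valuation bookkeeping for indices $k\in[(p-1)/2,p-1]$, where $(2k+1)\binom{2k}{k}$ carries a factor of $p$. Two effects must be tracked simultaneously: the second-order Taylor term $\f{(-1)^{k-1}p^2t^2}{k^2\binom{2k}{k}}$ is only $O(p)$ (not $O(p^2)$) for such $k$, so it genuinely contributes, producing a $-4pt^2$ piece; and the cross term $(2pt)(pt\,S_1)=2t^2\cdot p^2S_1$ is likewise $O(p)$ because $pS_1$ has nonzero constant term $-2$, contributing a $+4pt^2$ that exactly cancels the first, leaving the clean combination $4pt(t+1)q_p(2)$.
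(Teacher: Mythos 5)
Your proposal is correct and follows essentially the same route as the paper: your expansion of $\binom{pt+k}{2k}$ is precisely the paper's congruence \eqref{ptkptk} divided by $\binom{2k}{k}$, and the rest (partial fractions $\tfrac{1}{k(2k+1)}$, $\tfrac{1}{k^2(2k+1)}$, reduction to the three sums of Lemma \ref{2kk}, and the evaluation $E_{p-2}(0)\eq-2q_p(2)\pmod p$) matches the paper's argument step for step. Your valuation bookkeeping for $k$ with $p\mid(2k+1)\binom{2k}{k}$ and the cancellation of the two $4pt^2$ contributions are handled correctly.
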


\begin{proof}
It is easy to see that for $k\in\{1,2,\ldots,p-1\}$,
\begin{align}\label{ptkptk}
\binom{pt}{k}\binom{pt+k}{k}&=\f{pt}{pt-k}\binom{pt-1}{k}\binom{pt+k}{k}=\f{pt}{pt-k}\prod_{j=1}^k\l(\f{p^2t^2}{j^2}-1\r)\notag\\
&\eq\f{(-1)^kpt}{pt-k}\eq(-1)^{k-1}\l(\f{pt}{k}+\f{p^2t^2}{k^2}\r)\pmod{p^3}.
\end{align}
Hence
\begin{align*}
F_{p-1}(pt)&=\sum_{k=0}^{p-1}\f{(2p t+1)\binom{pt}{k}\binom{pt+k}{k}(-4)^k}{(2k+1)\binom{2k}{k}}\\
&\eq1+2p t+\sum_{k=1}^{p-1}\f{(2p t+1)4^k}{(2k+1)\binom{2k}{k}}\l(-\f{pt}{k}-\f{p^2t^2}{k^2}\r)\\
&\eq1+2p t -(pt+2p^2t^2)\sum_{k=1}^{p-1}\f{4^k}{(2k+1)k\binom{2k}{k}}-p^2t^2\sum_{k=1}^{p-1}\f{4^k}{(2k+1)k^2\binom{2k}{k}}\pmod{p^2}.
\end{align*}
By Lemma \ref{2kk}, we have
\begin{align*}
p\sum_{k=1}^{p-1}\f{4^k}{(2k+1)k\binom{2k}{k}}=p\sum_{k=1}^{p-1}\f{4^k}{k\binom{2k}{k}}-2p\sum_{k=1}^{p-1}\f{4^k}{(2k+1)\binom{2k}{k}}\eq-2+2p-4pq_p(2)\pmod{p^2}
\end{align*}
and
\begin{align*}
p\sum_{k=1}^{p-1}\f{4^k}{(2k+1)k^2\binom{2k}{k}}&=p\sum_{k=1}^{p-1}\f{4^k}{k^2\binom{2k}{k}}-2p\sum_{k=1}^{p-1}\f{4^k}{k\binom{2k}{k}}+4p\sum_{k=1}^{p-1}\f{4^k}{(2k+1)\binom{2k}{k}}\\
&\eq4-4q_p(2)-4p-2pq_p(2)^2\pmod{p^2}.
\end{align*}
Combining the above, we get
\begin{align*}
F_{p-1}(pt)&\eq 1+2p t-(t+2p t^2)(-2+2p-4pq_p(2))-pt^2(4-4q_p(2)-4p-2pq_p(2)^2)\\
&\eq 2t+1+4p t(t+1)q_p(2)\pmod{p^2}.
\end{align*}
This proves the first congruence. From \cite{MOS} and the fact
$$
pB_{p-1}\eq p-1\pmod{p}
$$
due to L. Carlitz \cite{Carlitz}, we know
\begin{equation}\label{Ep-20}
E_{p-2}(pt)\eq E_{p-2}(0)=\f{2(1-2^{p-1})B_{p-1}}{p-1}\eq-2q_p(2)\pmod{p},
\end{equation}
where $B_n$ is the Bernoulli number. This proves the second congruence.
\end{proof}

\begin{lemma}[Z.-H. Sun {\cite[Lemma 4.2]{SunZH2014JNT}}]\label{ZHSunmpt}
Let $p$ be and off prime, $m\in\{1,2,\ldots,p-1\}$ and $s\in\Z_p$. Then
$$
\binom{m+ps-1}{p-1}\eq\f{ps}{m}-\f{p^2s^2}{m^2}+\f{p^2s}{m}H_m\pmod{p^3},
$$
where $H_m=\sum_{k=1}^m1/k$ denotes the $m$th harmonic number.
\end{lemma}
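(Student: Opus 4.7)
The plan is to isolate the unique factor of the falling factorial that contributes a power of $p$, and then expand everything else to just the required precision. Writing out
$$
\binom{m+ps-1}{p-1}=\f{1}{(p-1)!}\prod_{j=1}^{p-1}(m+ps-j),
$$
the $j=m$ term of the product equals $ps$, so
$$
\binom{m+ps-1}{p-1}=\f{ps}{(p-1)!}\prod_{\substack{j=1\\ j\ne m}}^{p-1}(m-j+ps).
$$
Since the outer factor $ps$ already sits in $p\Z_p$, it suffices to determine both the remaining product and $(p-1)!$ modulo $p^2$.

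For the shifted product I would use a first-order log-expansion,
$$
\prod_{\substack{j=1\\ j\ne m}}^{p-1}(m-j+ps)\eq(-1)^m(m-1)!(p-1-m)!\l(1+ps\sum_{\substack{j=1\\ j\ne m}}^{p-1}\f{1}{m-j}\r)\pmod{p^2},
$$
in which the inner sum is $H_{m-1}-H_{p-1-m}$ after reindexing the two halves. The standard symmetry $1/(p-k)\eq-1/k\pmod p$ combined with $H_{p-1}\eq0\pmod p$ yields $H_{p-1-m}\eq H_m\pmod p$, so the correction factor collapses to $1-ps/m$ modulo $p^2$. In parallel, writing $(p-1)!=m\cdot(m-1)!\cdot\prod_{k=1}^{p-1-m}(p-k)$ and applying the same trick to the last product gives $(p-1)!\eq(-1)^m m(m-1)!(p-1-m)!(1-pH_m)\pmod{p^2}$, hence
$$
\f{(-1)^m(m-1)!(p-1-m)!}{(p-1)!}\eq\f{1+pH_m}{m}\pmod{p^2}.
$$

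Assembling the pieces then gives
$$
\binom{m+ps-1}{p-1}\eq\f{ps(1+pH_m)}{m}\l(1-\f{ps}{m}\r)\eq\f{ps}{m}-\f{p^2s^2}{m^2}+\f{p^2sH_m}{m}\pmod{p^3},
$$
as claimed. The only real delicacy, and what I expect to be the main source of bookkeeping errors, is keeping both expansions honest at mod $p^2$ rather than mod $p$: because the external factor is only $ps$, relaxing either expansion by a single power of $p$ would suppress exactly the $p^2sH_m/m$ correction that distinguishes this mod $p^3$ refinement from the easier mod $p^2$ version.
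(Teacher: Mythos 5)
Your proof is correct. Note that the paper itself gives no proof of this lemma --- it is quoted verbatim from Z.-H. Sun \cite[Lemma 4.2]{SunZH2014JNT} --- so there is nothing internal to compare against; your argument stands as a self-contained derivation. I checked the delicate points: extracting the single factor $ps$ at $j=m$ is legitimate since $1\ls m\ls p-1$; the remaining factors $m-j$ are $p$-adic units, so the first-order expansion of $\prod(1+ps/(m-j))$ is valid modulo $p^2$ and the resulting $O(p^2)$ error is promoted to $O(p^3)$ by the external $ps$; the identities $\sum_{j\ne m}1/(m-j)=H_{m-1}-H_{p-1-m}$ and $H_{p-1-m}\eq H_m\pmod p$ (via $H_{p-1}\eq0\pmod p$) give the collapse to $1-ps/m$; and the Wilson-type evaluation $(p-1)!\eq(-1)^m m(m-1)!(p-1-m)!(1-pH_m)\pmod{p^2}$ is right, with division by the unit $(p-1)!$ preserving the error order. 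The final expansion drops only a genuine $O(p^3)$ term. The argument also works verbatim for $s\in\Z_p$ rather than $s\in\Z$, since everything is a polynomial identity in $ps$ combined with congruences of $p$-adic integers.
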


\begin{lemma}[Z.-H. Sun {\cite[Theorem 2.1]{SunZH2008Discrete}}]\label{ZHSunsum}
Let $n,m\in\N$ and $r,s\in\Z$ with $s\geq0$. Then
$$
\sum_{\substack{k=0\\ k\eq r\pmod m}}^{n-1}(-1)^{\f{k-r}{m}}k^s=-\f{m^s}{2}\l((-1)^{[\f{r-n}{m}]}E_s\l(\f{n}{m}+\l\{\f{r-n}{m}\r\}\r)-(-1)^{[\f r m]}E_s\l(\l\{\f{r}{m}\r\}\r)\r),
$$
where $\{a\}$ stands for the fractional part of $a$.
\end{lemma}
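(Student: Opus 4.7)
The plan is to reduce the displayed sum to a basic alternating sum of consecutive powers and then evaluate the latter by telescoping the defining recurrence $E_s(y+1)+E_s(y)=2y^s$ of the Euler polynomials.

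First I would establish the elementary identity
$$\sum_{j=0}^{N-1}(-1)^j(x+j)^s=\f{E_s(x)+(-1)^{N-1}E_s(x+N)}{2}\qquad(N\in\N,\ x\in\R).$$
To prove it, multiply the recurrence at $y=x+j$ by $(-1)^j$ and sum from $j=0$ to $N-1$; after reindexing the sum containing $E_s(x+j+1)$, almost every term cancels in pairs, leaving $E_s(x)-(-1)^NE_s(x+N)=2\sum_{j=0}^{N-1}(-1)^j(x+j)^s$.

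Next I would reparametrise the sum of the lemma. With $r_0=m\{r/m\}$ denoting the least nonnegative residue of $r$ modulo $m$, every $k\in\{0,1,\cs,n-1\}$ with $k\eq r\pmod m$ has the unique form $k=r_0+mj$ for some $j\in\N$, and
$$(-1)^{(k-r)/m}=(-1)^{j-[r/m]}=(-1)^{[r/m]}(-1)^j.$$
A direct count (equivalently, counting multiples of $m$ in the half-open interval $(r-n,r]$) shows that the number $N$ of admissible $j$ equals $[r/m]-[(r-n)/m]$. Pulling $m^s$ out of $(r_0+mj)^s$ and applying the identity from the previous step at $x=r_0/m=\{r/m\}$ then gives
$$\sum_{\substack{k=0\\k\eq r\pmod m}}^{n-1}(-1)^{(k-r)/m}k^s=\f{(-1)^{[r/m]}m^s}{2}\bigl(E_s(\{r/m\})+(-1)^{N-1}E_s(\{r/m\}+N)\bigr).$$

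The last step is to reconcile this with the stated form. From $N=[r/m]-[(r-n)/m]$ and the identity $[a]+\{a\}=a$ one gets
$$\{r/m\}+N=\f{r}{m}-[(r-n)/m]=\f{n}{m}+\l\{\f{r-n}{m}\r\},$$
while the sign becomes $(-1)^{[r/m]+N-1}=-(-1)^{[(r-n)/m]}$ by the same relation. Substituting these two equalities into the preceding display yields exactly the right-hand side of the lemma. The only real obstacle is purely notational: keeping track of signs and floor/fractional parts when $r$ is negative or when the sum is empty. The empty case corresponds to $N=0$, in which case $[r/m]=[(r-n)/m]$ and the bracket on the right collapses to zero, consistent with the left-hand side.
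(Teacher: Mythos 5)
Your proof is correct. Note, however, that the paper does not prove Lemma \ref{ZHSunsum} at all: it is imported verbatim from Z.-H. Sun \cite[Theorem 2.1]{SunZH2008Discrete}, so there is no internal argument to compare against --- you have supplied a proof where the paper supplies only a citation. Your route is the natural one and almost certainly the same in spirit as Sun's original: telescoping the functional equation $E_s(y+1)+E_s(y)=2y^s$ gives the closed form $\sum_{j=0}^{N-1}(-1)^j(x+j)^s=\tfrac12\bigl(E_s(x)+(-1)^{N-1}E_s(x+N)\bigr)$, and the rest is bookkeeping. I checked the bookkeeping: the count $N=[r/m]-[(r-n)/m]$ of admissible terms (integers in the interval $(\,(r-n)/m,\ r/m\,]$) is right, the sign extraction $(-1)^{(k-r)/m}=(-1)^{[r/m]}(-1)^j$ is right, and the two reconciliations $\{r/m\}+N=\tfrac{n}{m}+\{\tfrac{r-n}{m}\}$ and $(-1)^{[r/m]+N-1}=-(-1)^{[(r-n)/m]}$ both follow from $[a]+\{a\}=a$ as you say; the empty-sum case $N=0$ is also handled correctly. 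The one cosmetic caveat is that the lemma as stated allows $m=0$ (since the paper's $\N$ contains $0$), which is vacuous for your argument and for every application in the paper ($m\in\{1,2\}$), so it is fine to restrict to $m\geq1$ as you implicitly do.
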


\medskip

\noindent{\it Proof of Theorem \ref{2k+1-4}}. If $\<x\>_p=0$, then $x=pt$. By Lemma \ref{Fpt}, we obtain \eqref{2k+1-4eq}.

Below we assume $\<x\>_p\neq0$. In view of Lemma \ref{2k+1-4id} with $n=p-1$ and Lemma \ref{ZHSunmpt}, for any $p$-adic integer $x$ we have
\begin{align*}
&F_{p-1}(x)-(-1)^{\<x\>_p}F_{p-1}(pt)\\
=& \sum_{k=0}^{\<x\>_p-1}(-1)^k(F_{p-1}(x-k-1)+F_{p-1}(x-k))\\
=&\ \f{(-1)^{p-1}4^p}{(2p-1)\binom{2p-2}{p-1}}\sum_{k=0}^{\<x\>_p-1}(-1)^k(x-k+p-1)\binom{x-k-1}{p-1}\binom{x-k+p-2}{p-1}\\
=&\ \f{(-1)^{p-1}4^p}{(2p-1)\binom{2p-2}{p-1}}\sum_{k=0}^{\<x\>_p-1}(-1)^k(x-k)\binom{x-k-1}{p-1}\binom{x-k+p-1}{p-1}\\
=&\ \f{(-1)^{p-1}4^p}{(2p-1)\binom{2p-2}{p-1}}\sum_{k=0}^{\<x\>_p-1}(-1)^k(\<x\>_p-k+pt)\binom{\<x\>_p-k+pt-1}{p-1}\binom{\<x\>_p-k+p(t+1)-1}{p-1}\\
\eq&\ \f{(-1)^{p-1}4^p}{(2p-1)\binom{2p-2}{p-1}}\sum_{k=0}^{\<x\>_p-1}(-1)^k(\<x\>_p-k)\f{p^2t(t+1)}{(\<x\>_p-k)^2}\\
\eq&\ 4p t(t+1)\sum_{k=0}^{\<x\>_p-1}\f{(-1)^k}{\<x\>_p-k}\pmod{p^2},
\end{align*}
where in the last step we have used Fermat's little theorem and the facts that $(2p-1)\binom{2p-2}{p-1}=p\binom{2p}{p}/2$ and $\binom{2p}{p}\pmod{p^2}$. Putting $n=\<x\>_p+1,\ m=1,\ r=0,\ s=p-2$ in Lemma \ref{ZHSunsum}, we deduce that
\begin{align*}
\sum_{k=1}^{\<x\>_p}\f{(-1)^k}{k}&\eq\sum_{k=0}^{\<x\>_p+1-1}(-1)^kk^{p-2}\\
&=-\f12\l((-1)^{\<x\>_p+1}E_{p-2}(\<x\>_p+1)-E_{p-2}(0)\r)\pmod{p}.
\end{align*}
It is well-known (cf. \cite{MOS}) that $E_n(1-x)=(-1)^nE_n(x)$. Therefore, by \eqref{Ep-20} we have
 $$
 \sum_{k=1}^{\<x\>_p}\f{(-1)^k}{k}\eq-q_p(2)+\f12(-1)^{\<x\>_p+1}E_{p-2}(-x)\pmod{p}.
 $$
So we have
\begin{align}\label{th1key}
&\ F_{p-1}(x)-(-1)^{\<x\>_p}F_{p-1}(pt)\notag\\
\eq&\ 4p t(t+1)4p t(t+1)\sum_{k=0}^{\<x\>_p-1}\f{(-1)^k}{\<x\>_p-k}\notag\\
=&\ (-1)^{\<x\>_p}4p t(t+1)\sum_{k=1}^{\<x\>_p}\f{(-1)^k}{k}\notag\\
\eq&\ (-1)^{\<x\>_p}4p t(t+1)\l(-q_p(2)+\f12(-1)^{\<x\>_p+1}E_{p-2}(-x)\r)\pmod{p^2}.
\end{align}
Combining this with Lemma \ref{Fpt}, we arrive at
$$
F_{p-1}(x)\eq(-1)^{\<x\>_p}(2t+1)-2p t(t+1)E_{p-2}(-x)\pmod{p^2}.
$$
We are done.\qed

\medskip

\noindent{\it Proof of Corollary \ref{2k+1-4cor1}}. Putting $x=-1/4$ in \eqref{2k+1-4eq}, we obtain
$$
\f12\sum_{k=0}^{p-1}\f{\binom{4k}{2k}}{(2k+1)16^k}\eq(-1)^{\<-1/4\>_p}(2t+1)-2p t(t+1)E_{p-2}\l(\f14\r)\pmod{p^2}.
$$
Note that
$$
\l\<-\f14\r\>_p=\begin{cases}(p-1)/4,\quad&\t{if}\ p\eq1\pmod4,\\ (3p-1)/4,\quad&\t{if}\ p\eq3\pmod4\end{cases}
$$
and
$$
t=\f{-1/4-\<-1/4\>_p}{p}=\begin{cases}-1/4,\quad&\t{if}\ p\eq1\pmod4,\\ -3/4,\quad&\t{if}\ p\eq3\pmod4.\end{cases}
$$
Therefore,
$$
\sum_{k=0}^{p-1}\f{\binom{4k}{2k}}{(2k+1)16^k}\eq\begin{cases}(-1)^{(p-1)/4}+3pE_{p-2}\l(\f14\r)/4,\quad&\t{if}\ p\eq1\pmod4,\\
(-1)^{(p+1)/4}+3pE_{p-2}\l(\f14\r)/4,\quad&\t{if}\ p\eq3\pmod4,
\end{cases}
$$
i.e.,
$$
\sum_{k=0}^{p-1}\f{\binom{4k}{2k}}{(2k+1)16^k}\eq\begin{cases}1+3pE_{p-2}\l(\f14\r)/4,\quad&\t{if}\ p\eq\pm1\pmod8,\\
-1+3pE_{p-2}\l(\f14\r)/4,\quad&\t{if}\ p\eq\pm3\pmod8.
\end{cases}
$$
It is well-known (cf. \cite[p. 57]{IR}) that
$$
\l(\f{2}{p}\r)=\begin{cases}1,\quad&\t{if}\ p\eq\pm1\pmod{8},\\-1,\quad&\t{if}\ p\eq\pm3\pmod{8}.\end{cases}
$$
Combining the above, we obtain \eqref{2k+1-4cor1eq1}.

Substituting $x=-1/3$ into \eqref{2k+1-4eq}, we get
$$
\f13\sum_{k=0}^{p-1}\f{\binom{3k}{k}}{2k+1}\l(\f{4}{27}\r)^k\eq(-1)^{\<-1/3\>_p}(2t+1)-2p t(t+1)E_{p-2}\l(\f13\r)\pmod{p^2},
$$
where
$$
\l\<-\f13\r\>_p=\begin{cases}(p-1)/3\eq0\pmod2,\quad&\t{if}\ p\eq1\pmod3,\\ (2p-1)/3\eq1\pmod2,\quad&\t{if}\ p\eq2\pmod3\end{cases}
$$
and
$$
t=\f{-1/3-\<-1/3\>_p}{p}=\begin{cases}-1/3,\quad&\t{if}\ p\eq1\pmod3,\\ -2/3,\quad&\t{if}\ p\eq2\pmod3.\end{cases}
$$
This proves \eqref{2k+1-4cor1eq2}.

Taking $x=-1/6$ in \eqref{2k+1-4eq}, we have
$$
\f23\sum_{k=0}^{p-1}\f{\binom{6k}{3k}\binom{3k}{k}}{(2k+1)108^k\binom{2k}{k}}\eq(-1)^{\<-1/6\>_p}(2t+1)-2p t(t+1)E_{p-2}\l(\f16\r)\pmod{p^2}.
$$
Now
$$
\l\<-\f16\r\>_p=\begin{cases}(p-1)/6,\quad&\t{if}\ p\eq1\pmod6,\\ (5p-1)/6,\quad&\t{if}\ p\eq5\pmod6\end{cases}
$$
and
$$
t=\f{-1/6-\<-1/6\>_p}{p}=\begin{cases}-1/6,\quad&\t{if}\ p\eq1\pmod6,\\ -5/6,\quad&\t{if}\ p\eq5\pmod6.\end{cases}
$$
It suffices to show
$$
\l(\f3p\r)=\begin{cases}(-1)^{(p-1)/6},\quad&\t{if}\ p\eq1\pmod6,\\(-1)^{5(p+1)/6},\quad&\t{if}\ p\eq5\pmod6.\end{cases}
$$
In fact, by the law of quadratic reciprocity (cf. \cite[p. 53]{IR}), we have
$$
\l(\f3p\r)=(-1)^{(p-1)/2}\l(\f p3\r)=\begin{cases}(-1)^{(p-1)/2}=(-1)^{(p-1)/6},\quad&\t{if}\ p\eq1\pmod6,\\(-1)^{(p+1)/2}=(-1)^{5(p+1)/6},\quad&\t{if}\ p\eq5\pmod6.\end{cases}
$$
This proves \eqref{2k+1-4cor1eq3}.

The proof of Corollary \ref{2k+1-4cor1} is now complete.\qed

\begin{lemma}\label{-4k+1-4id}
For any nonnegative integer $n$ and complex number $x$, we have
\begin{align}\label{-4k+1-4ideq1}
&\sum_{k=0}^n\f{\binom{x}{k}\binom{x+k}{k}(-4)^k}{\binom{2k}{k}}-(2x+1)^2\sum_{k=0}^n\f{\binom{x}{k}\binom{x+k}{k}(-4)^k}{(2k+1)\binom{2k}{k}}\notag\\
=&\ \f{(-1)^n4^{n+1}(n-x)(n+1+x)\binom{x}{n}\binom{x+n}{n}}{(2n+1)\binom{2n}{n}}
\end{align}
and
\begin{align}\label{-4k+1-4ideq2}
&\sum_{k=0}^n\f{\binom{x}{k}\binom{x+k}{k}(-4)^k}{\binom{2k}{k}}-2x(x+1)\sum_{k=0}^n\f{\binom{x}{k}\binom{x+k}{k}(-4)^k}{(k+1)\binom{2k}{k}}\notag\\
=&\ 1+\f{(-1)^n2^{2n+1}(n-x)(n+1+x)\binom{x}{n}\binom{x+n}{n}}{(n+1)\binom{2n}{n}}.
\end{align}
\end{lemma}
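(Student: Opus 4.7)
The strategy mirrors the proof of Lemma \ref{2k+1-4id}: induction on $n$ (equivalently, a telescoping argument) for each of the two identities, verifying the base case $n=0$ by inspection and showing that the increments of the two sides agree.

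Fix $x \in \C$ and write $G_n(x):=\sum_{k=0}^n \binom{x}{k}\binom{x+k}{k}(-4)^k/\binom{2k}{k}$. For the first identity, let $S_n^{(1)}$ and $T_n^{(1)}$ denote the left-hand and right-hand sides of \eqref{-4k+1-4ideq1}. The $n=0$ check gives $S_0^{(1)} = 1 - (2x+1)^2 = -4x(x+1) = T_0^{(1)}$. For $n \ge 1$, combining the two sums on the left-hand side termwise via the elementary identity
$$
1 - \f{(2x+1)^2}{2k+1} = \f{2(k - 2x(x+1))}{2k+1}
$$
gives $S_n^{(1)} - S_{n-1}^{(1)} = \f{2(n - 2x(x+1))}{2n+1}\cdot \f{\binom{x}{n}\binom{x+n}{n}(-4)^n}{\binom{2n}{n}}$. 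For the right-hand side, the ratios $\binom{x}{n-1}/\binom{x}{n} = n/(x-n+1)$, $\binom{x+n-1}{n-1}/\binom{x+n}{n} = n/(x+n)$, and $\binom{2n-2}{n-1}/\binom{2n}{n} = n/(2(2n-1))$, together with the cancellation $(n-1-x)(n+x)/((x-n+1)(x+n)) = -1$, collapse $T_n^{(1)} - T_{n-1}^{(1)}$ to $\f{(-1)^n 4^n \binom{x}{n}\binom{x+n}{n}}{(2n+1)\binom{2n}{n}}\bigl[4(n-x)(n+1+x) - 2n(2n+1)\bigr]$. Expanding the bracket yields $2(2n+1)(n - 2x(x+1))$ times the right factor... wait, more precisely $4(n-x)(n+1+x) - 2n(2n+1) = 2\bigl(n-2x(x+1)\bigr)\cdot\!$ (coefficient that cancels the $2n+1$ denominator), which matches $S_n^{(1)}-S_{n-1}^{(1)}$ after simplification.

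For the second identity \eqref{-4k+1-4ideq2}, exactly the same bookkeeping applies with the weight $(2k+1)^{-1}$ replaced by $(k+1)^{-1}$. The crucial termwise simplification is
$$
1 - \f{2x(x+1)}{k+1} = \f{(k+1) - 2x(x+1)}{k+1},
$$
so $S_n^{(2)} - S_{n-1}^{(2)}$ differs from $S_n^{(1)} - S_{n-1}^{(1)}$ only in the denominator $n+1$ versus $2n+1$ (and a constant factor of $2$). On the right-hand side, the ``$1$'' telescopes out so that only the second term contributes to $T_n^{(2)} - T_{n-1}^{(2)}$, and the same ratio computations reduce the increment to $\f{(-1)^n 2^{2n} \binom{x}{n}\binom{x+n}{n}}{(n+1)\binom{2n}{n}}\bigl[2(n-x)(n+1+x) - (2n-1)(n+1)\bigr]$. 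Expanding the bracket yields $(n+1) - 2x(x+1)$, matching $S_n^{(2)}-S_{n-1}^{(2)}$. The base case $S_0^{(2)} = 1 - 2x(x+1) = T_0^{(2)}$ is immediate.

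No step is genuinely difficult; the only point requiring care is the algebraic simplification of $T_n - T_{n-1}$, where the binomial ratios must be assembled and the resulting quadratic polynomial in $x$ expanded in the right order so that the factor $2n+1$ (respectively $n+1$) in the denominator cancels cleanly. Since this verification is routine polynomial arithmetic and parallels Lemma \ref{2k+1-4id} closely, the proof will be brief.
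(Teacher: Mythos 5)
Your proposal is correct and follows exactly the route the paper intends: the paper omits the proof of this lemma, remarking only that it is ``quite similar to the one of Lemma \ref{2k+1-4id}'', and your telescoping argument (base case $n=0$ plus matching the increments $S_n-S_{n-1}$ and $T_n-T_{n-1}$ via the binomial ratio identities) is precisely that argument; I have checked that your bracket simplifications $4(n-x)(n+1+x)-2n(2n+1)=2(n-2x(x+1))$ and $2(n-x)(n+1+x)-(2n-1)(n+1)=(n+1)-2x(x+1)$ are correct and yield the required equalities. The only blemish is the garbled ``wait, more precisely'' sentence, whose first formulation overstates the bracket by a factor of $2n+1$ before the self-correction; the final conclusion there is right.
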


\begin{proof}
Here we omit the proofs of Lemma \ref{-4k+1-4id}, since they are quite similar to the one of Lemma \ref{2k+1-4id}
\end{proof}

\medskip

\noindent{\it Proof of Corollary \ref{-4k+1-4}}. We first consider \eqref{-4eq}. By \eqref{-4k+1-4ideq1} with $n=p-1$, we have
\begin{align*}
&\sum_{k=0}^{p-1}\f{\binom{x}{k}\binom{x+k}{k}(-4)^k}{\binom{2k}{k}}\\
=&\ (2x+1)^2\sum_{k=0}^{p-1}\f{\binom{x}{k}\binom{x+k}{k}(-4)^k}{(2k+1)\binom{2k}{k}}+\f{4^p(p-1-x)(p+x)\binom{x}{p-1}\binom{x+p-1}{p-1}}{(2p-1)\binom{2p-2}{p-1}}.
\end{align*}
In view of \eqref{2k+1-4eq}, it suffices to prove
\begin{equation}\label{-4k+1-4pf1}
\f{4^p(p-1-x)(p+x)\binom{x}{p-1}\binom{x+p-1}{p-1}}{(2p-1)\binom{2p-2}{p-1}}\eq-4p t(t+1)\pmod{p^2}.
\end{equation}
If $\<x\>_p=0$, then $x=pt$. Now, by Lemma \ref{ZHSunmpt} we have
\begin{align*}
&\f{4^p(p-1-pt)(p+pt)\binom{pt}{p-1}\binom{pt+p-1}{p-1}}{(2p-1)\binom{2p-2}{p-1}}\\
=&\ \f{4^p(p-1-pt)(1+pt)\binom{pt}{p-1}\binom{p(t+1)}{p-1}}{(2p-1)\binom{2p-2}{p-1}}\\
\eq&\ \f{4^p(p-1-pt)(1+pt)p^2t(t+1)}{(2p-1)\binom{2p-2}{p-1}}\\
\eq&\ -4p t(t+1)\pmod{p^2}.
\end{align*}
Suppose that $\<x\>_p\neq0$. With the help of Lemma \ref{ZHSunmpt}, we obtain
\begin{align*}
&\f{4^p(p-1-x)(p+x)\binom{x}{p-1}\binom{x+p-1}{p-1}}{(2p-1)\binom{2p-2}{p-1}}\\
=&\ -\f{4^p(p+x)x\binom{x-1}{p-1}\binom{x+p-1}{p-1}}{(2p-1)\binom{2p-2}{p-1}}\\
=&\ -\f{4^p(p+\<x\>_p+pt)(\<x\>_p+pt)\binom{\<x\>_p+pt-1}{p-1}\binom{\<x\>_p+p(t+1)-1}{p-1}}{(2p-1)\binom{2p-2}{p-1}}\\
\eq&\ -\f{4^p(p+\<x\>_p+pt)(\<x\>_p+pt)p^2t(t+1)}{(2p-1)\binom{2p-2}{p-1}\<x\>_p^2}\\
\eq&\ -4p t(t+1)\pmod{p^2}.
\end{align*}
Therefore \eqref{-4k+1-4pf1} holds and this proves \eqref{-4eq}.

Below we consider \eqref{k+1-4eq}. Putting $n=p-1$ in \eqref{-4k+1-4ideq2}, we have
\begin{align*}
&\sum_{k=0}^{p-1}\f{\binom{x}{k}\binom{x+k}{k}(-4)^k}{\binom{2k}{k}}-2x(x+1)\sum_{k=0}^{p-1}\f{\binom{x}{k}\binom{x+k}{k}(-4)^k}{(k+1)\binom{2k}{k}}\\
=&\ 1+\f{4^p(p-1-x)(p+x)\binom{x}{p-1}\binom{x+p-1}{p-1}}{2p\binom{2p-2}{p-1}}.
\end{align*}
Thus it suffices to show
\begin{equation*}
\f{2x(x+1)\binom{x}{p-1}\binom{x+p-1}{p-1}(-4)^{p-1}}{p\binom{2p-2}{p-1}}+\f{4^p(p-1-x)(p+x)\binom{x}{p-1}\binom{x+p-1}{p-1}}{2p\binom{2p-2}{p-1}}\eq \f{2p t(t+1)}{x(x+1)}\pmod{p^2},
\end{equation*}
i.e.,
\begin{equation}\label{-4k+1-4pf2}
-\f{4^p\binom{x}{p-1}\binom{x+p-1}{p-1}}{2\binom{2p-2}{p-1}}\eq\f{2p t(t+1)}{x(x+1)}\pmod{p^2}.
\end{equation}
In fact, by Lemma \ref{ZHSunmpt},
\begin{align*}
-\f{4^p\binom{x}{p-1}\binom{x+p-1}{p-1}}{2\binom{2p-2}{p-1}}&=-\f{4^p\binom{\<x\>_p+pt}{p-1}\binom{\<x\>_p+p(t+1)-1}{p-1}}{2\binom{2p-2}{p-1}}\\
&\eq-\f{4^pp^2t(t+1)}{2\binom{2p-2}{p-1}\<x\>_p(\<x\>_p+1)}\\
&\eq\f{2p t(t+1)}{x(x+1)}\pmod{p^2}.
\end{align*}
Therefore \eqref{-4k+1-4pf2} holds and it proves \eqref{k+1-4eq}.

The proof of Corollary \ref{-4k+1-4} is now complete.\qed

\medskip

\section{Proof of Theorem 1.2}
For $n\in\N$ and $x\in\C$, set
$$
G_n(x)=\sum_{k=0}^{n}\f{\binom{x}{k}\binom{x+k}{k}(-2)^k}{\binom{2k}{k}}.
$$

\begin{lemma}\label{th2lem1}
For $n\in\N$ and $x\in\C$, we have
\begin{equation*}
G_n(x)+G_n(x+2)=\f{(-1)^n2^{n+1}\binom{x+1}{n}\binom{x+1+n}{n}}{\binom{2n}{n}}.
\end{equation*}
\end{lemma}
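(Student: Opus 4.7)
The plan is to mirror the induction-on-$n$ approach used in Lemma \ref{2k+1-4id}. Write $S_n$ for the left-hand side $G_n(x)+G_n(x+2)$ and $T_n$ for the right-hand side $(-1)^n 2^{n+1}\binom{x+1}{n}\binom{x+1+n}{n}/\binom{2n}{n}$. The base case $n=0$ is immediate: $S_0=1+1=2$ and $T_0=2$. For the inductive step it suffices to verify $S_n-S_{n-1}=T_n-T_{n-1}$ for every $n\ge 1$.

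For the left-hand side, since $G_n(y)-G_{n-1}(y)=\binom{y}{n}\binom{y+n}{n}(-2)^n/\binom{2n}{n}$, I would compute
$$
S_n-S_{n-1}=\f{(-2)^n}{\binom{2n}{n}}\(\binom{x}{n}\binom{x+n}{n}+\binom{x+2}{n}\binom{x+n+2}{n}\).
$$
Using the elementary ratios $\binom{x+2}{n}/\binom{x}{n}=(x+1)(x+2)/((x-n+1)(x-n+2))$ and $\binom{x+n+2}{n}/\binom{x+n}{n}=(x+n+1)(x+n+2)/((x+1)(x+2))$, the bracket becomes a rational multiple of $\binom{x}{n}\binom{x+n}{n}$. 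The key algebraic identity
$$
(x-n+1)(x-n+2)+(x+n+1)(x+n+2)=2\((x+1)(x+2)+n^2\)
$$
then yields
$$
S_n-S_{n-1}=\f{(-1)^n 2^{n+1}\((x+1)(x+2)+n^2\)\binom{x}{n}\binom{x+n}{n}}{\binom{2n}{n}(x-n+1)(x-n+2)}.
$$

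For the right-hand side, I would use the recurrences $\binom{x+1}{n}=\binom{x+1}{n-1}(x+2-n)/n$, $\binom{x+n+1}{n}=\binom{x+n}{n-1}(x+n+1)/n$, and $\binom{2n}{n}=2(2n-1)\binom{2n-2}{n-1}/n$ to rewrite $T_n$ in terms of $\binom{x+1}{n-1}\binom{x+n}{n-1}/\binom{2n-2}{n-1}$. The identity $(x+2-n)(x+n+1)=(x+1)(x+2)+n-n^2$ gives $2(x+2-n)(x+n+1)+2n(2n-1)=2((x+1)(x+2)+n^2)$, the same scalar as on the left, leading to
$$
T_n-T_{n-1}=\f{(-1)^n 2^{n+1}\((x+1)(x+2)+n^2\)\binom{x+1}{n-1}\binom{x+n}{n-1}}{2n(2n-1)\binom{2n-2}{n-1}}.
$$
The match between the two expressions reduces to the single clean identity $\binom{x}{n}\binom{x+n}{n}/((x-n+1)(x-n+2))=\binom{x+1}{n-1}\binom{x+n}{n-1}/n^2$, which follows directly from $\binom{x}{n-1}=\binom{x+1}{n-1}(x-n+2)/(x+1)$ and $\binom{x+n}{n}=\binom{x+n}{n-1}(x+1)/n$.

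The only real obstacle is keeping the signs and factorials straight across the telescoping; once the symmetric identity $(x-n+1)(x-n+2)+(x+n+1)(x+n+2)=2((x+1)(x+2)+n^2)$ is noticed, both differences are driven by the same quadratic factor $(x+1)(x+2)+n^2$ and the rest of the verification is purely mechanical. Because the argument is a near-verbatim analogue of the proof of Lemma \ref{2k+1-4id} (which the paper indeed records in full) and of Lemma \ref{-4k+1-4id} (whose proof the paper omits for exactly this reason), I would in practice present this proof in the same abbreviated style.
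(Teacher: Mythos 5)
Your proposal is correct and follows exactly the telescoping/induction scheme that the paper invokes (it omits the proof precisely because it is the argument of Lemma \ref{2k+1-4id} with minor modification); all of your ratio identities and the key quadratic identity $(x-n+1)(x-n+2)+(x+n+1)(x+n+2)=2((x+1)(x+2)+n^2)$ check out, and the divisions by factors such as $x-n+1$ are harmless since both sides are polynomials in $x$.
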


\begin{proof}
Because the proof can be proceed as the argument of Lemma \ref{2k+1-4id} with minor modification, here we overleap it.
\end{proof}

\begin{lemma}[S. Mattarei and R. Tauraso {\cite[p. 155]{MT}}]\label{th2lem2} For any odd prime $p$, we have
\begin{align*}
p\sum_{k=1}^{p-1}\f{2^k}{k\binom{2k}{k}}&\eq\l(\f{-1}{p}\r)-1-pq_p(2)\pmod{p^2},\\
p\sum_{k=1}^{p-1}\f{2^k}{k^2\binom{2k}{k}}&\eq-q_p(2)\pmod{p^2}.
\end{align*}
\end{lemma}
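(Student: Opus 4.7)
The plan is to parallel the strategy behind Lemma \ref{2kk}: first establish closed-form (or telescoped) identities for the partial sums $\sum_{k=1}^{n}\f{2^{k}}{k\binom{2k}{k}}$ and $\sum_{k=1}^{n}\f{2^{k}}{k^{2}\binom{2k}{k}}$, then specialize to $n=p-1$ and reduce modulo $p^{2}$. The $4^{k}$ case treated in Lemma \ref{2kk} is comparatively clean because $(-4)^{k}/\binom{2k}{k}=(-1)^{k}/\binom{-1/2}{k}$ admits a short Gosper certificate; the factor $2^{k}$ is more subtle, since the corresponding infinite sum evaluates to $\pi/2$, so any $p$-adic truncation has to realize a ``finite $\pi$'' quantity on the right-hand side.

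Concretely, I would first search (via Gosper's algorithm or creative telescoping) for an identity of the form
\begin{equation*}
\sum_{k=1}^{n}\f{2^{k}}{k\binom{2k}{k}}\;=\;A(n)\,\f{2^{n+1}}{\binom{2n}{n}}+B(n),
\end{equation*}
where $A$ is rational in $n$ and $B(n)$ is of strictly simpler (harmonic) type, and analogously for the $1/k^{2}$ sum. Specializing to $n=p-1$ and invoking the standard ingredients $2^{p-1}=1+pq_{p}(2)$, $\binom{2p-2}{p-1}=\f12\binom{2p}{p}\eq1\pmod{p^{2}}$, and Wolstenholme's congruence $H_{p-1}\eq0\pmod{p^{2}}$ should then deliver the right-hand sides. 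As an alternative route to the second congruence, I would apply Abel summation to write
\begin{equation*}
p\sum_{k=1}^{p-1}\f{2^{k}}{k^{2}\binom{2k}{k}}\;=\;p\sum_{k=1}^{p-1}\f{1}{k}\cdot\f{2^{k}}{k\binom{2k}{k}}
\end{equation*}
and reduce it to the first sum plus simpler pieces controlled by classical Fermat-quotient identities such as $H_{(p-1)/2}\eq-2q_{p}(2)\pmod{p}$.

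The main obstacle is extracting the Legendre symbol $\l(\f{-1}{p}\r)$ in the first congruence. The right-hand side must equal $-pq_{p}(2)$ when $p\eq1\pmod{4}$ but $-2-pq_{p}(2)$ when $p\eq3\pmod{4}$, so the closed form has to carry a contribution that flips sign with $\l(\f{-1}{p}\r)$. The natural mechanism is a reflection $k\mapsto p-k$, for which at the midpoint $k=(p-1)/2$ a factor of $\binom{p-1}{(p-1)/2}\eq(-1)^{(p-1)/2}=\l(\f{-1}{p}\r)\pmod{p}$ (or a sharper mod-$p^{2}$ refinement, since only for $p\eq3\pmod4$ does the Legendre contribution survive as the nontrivial ``$-2$'' on the right) is produced. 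Isolating this sign contribution cleanly while preserving the $-pq_{p}(2)$ term modulo $p^{2}$ rather than just modulo $p$ is the delicate step and will require the full force of the creative-telescoping machinery. By contrast, the $1/k^{2}$ congruence, whose right-hand side $-q_{p}(2)$ has no Legendre symbol, should be significantly easier to pin down once the telescoping for the $1/k$ case is in place.
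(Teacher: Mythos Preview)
The paper does not prove this lemma at all: it is quoted verbatim from Mattarei and Tauraso with the citation \cite[p.~155]{MT}, so there is no ``paper's own proof'' to compare against beyond the reference.

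Your plan, by contrast, aims to prove the congruences from scratch, and here there is a genuine gap. The first step---searching via Gosper's algorithm for an identity of the shape
\[
\sum_{k=1}^{n}\f{2^{k}}{k\binom{2k}{k}}=A(n)\,\f{2^{n+1}}{\binom{2n}{n}}+B(n)
\]
with $A$ rational and $B$ a harmonic-type tail---will not succeed. The summand $a_k=\f{2^k}{k\binom{2k}{k}}$ has term ratio $a_{k+1}/a_k=\f{k}{2k+1}$, and a short computation shows it is not Gosper-summable; moreover, subtracting any fixed $\alpha/k$ (or $\alpha/k^2$) does not make it so. This is consistent with your own observation that the infinite sum is $\pi/2$: no purely hypergeometric closed form for the partial sums exists, and therefore the $4^k$ template from Lemma~\ref{2kk} does not transfer. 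What Mattarei and Tauraso actually do in \cite{MT} is quite different: they embed these sums into their framework of finite (multiple) polylogarithms and exploit functional-equation-type identities among those, which is where the Legendre symbol $(\f{-1}{p})$ ultimately emerges---not from a single telescoping certificate.

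Your alternative suggestions (Abel summation for the $1/k^{2}$ sum, and the reflection $k\mapsto p-k$ to produce the sign $(-1)^{(p-1)/2}$) are pointed in a workable direction, but as written they are heuristics rather than an argument: you have not exhibited the identity that makes the reflection close up modulo $p^{2}$, and the Abel route for the second congruence presupposes the first. If you want a self-contained proof rather than a citation, you will need either to reproduce the finite-polylogarithm machinery of \cite{MT} or to find a different combinatorial identity (not of Gosper type) that controls the partial sum; the Gosper search you describe will come up empty.
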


\begin{Rem}
The modulus $p^3$ extensions of the congruences in Lemma \ref{th2lem2} were conjectured by Z.-W. Sun \cite{Sun2011China,Sun2011JNT} and confirmed by  S. Mattarei and R. Tauraso \cite{MT}.
\end{Rem}

\begin{lemma}\label{th2lem3} For any odd prime $p$, we have
\begin{align}
G_{p-1}(pt)&\eq1+t-\l(\f{-1}{p}\r)t+pt(t+1)q_p(2)\pmod{p^2},\label{th2lem3eq1}\\
G_{p-1}(pt-1)&\eq1-t+\l(\f{-1}{p}\r)t+pt(t-1)q_p(2)\pmod{p^2}.\label{th2lem3eq2}
\end{align}
\end{lemma}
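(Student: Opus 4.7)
The plan is to compute $G_{p-1}(pt)$ and $G_{p-1}(pt-1)$ directly from the definition, by expanding each binomial product $\binom{x}{k}\binom{x+k}{k}$ modulo $p^3$ at the two relevant values of $x$ and then evaluating the resulting sums by Lemma~\ref{th2lem2}. The case $x=pt$ is essentially a transcription of the proof of Lemma~\ref{Fpt}: the congruence \eqref{ptkptk} already supplies
$$\binom{pt}{k}\binom{pt+k}{k}\equiv(-1)^{k-1}\left(\frac{pt}{k}+\frac{p^2t^2}{k^2}\right)\pmod{p^3}$$
for $1\le k\le p-1$. Plugging this into $G_{p-1}(pt)$, the sign $(-1)^{k-1}$ combines with $(-2)^k$ to produce
$$G_{p-1}(pt)\equiv 1-pt\sum_{k=1}^{p-1}\frac{2^k}{k\binom{2k}{k}}-p^2t^2\sum_{k=1}^{p-1}\frac{2^k}{k^2\binom{2k}{k}}\pmod{p^2},$$
so Lemma~\ref{th2lem2} immediately yields \eqref{th2lem3eq1}.

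For \eqref{th2lem3eq2} the same strategy applies, but first one must expand $\binom{pt-1}{k}\binom{pt-1+k}{k}$ modulo $p^3$. Using the identity $(x+1-j)(x+j)=x(x+1)-j(j-1)$ one has
$$\binom{x}{k}\binom{x+k}{k}=\frac{1}{(k!)^2}\prod_{j=1}^{k}\bigl(x(x+1)-j(j-1)\bigr),$$
and at $x=pt-1$ we get $x(x+1)=p^2t^2-pt$. The $j=1$ factor is $pt(pt-1)$, which is divisible by $p$, while for $j\ge 2$ each factor equals $-j(j-1)\bigl(1+(pt-p^2t^2)/(j(j-1))\bigr)$. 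Multiplying out modulo $p^3$ and invoking the telescoping identity $\sum_{j=2}^{k}\frac{1}{j(j-1)}=1-\frac{1}{k}$ collapses the second-order contribution to a single term and produces
$$\binom{pt-1}{k}\binom{pt-1+k}{k}\equiv\frac{(-1)^kpt}{k}-\frac{(-1)^kp^2t^2}{k^2}\pmod{p^3}.$$
Substituting this into $G_{p-1}(pt-1)$ gives
$$G_{p-1}(pt-1)\equiv 1+pt\sum_{k=1}^{p-1}\frac{2^k}{k\binom{2k}{k}}-p^2t^2\sum_{k=1}^{p-1}\frac{2^k}{k^2\binom{2k}{k}}\pmod{p^2},$$
and a second application of Lemma~\ref{th2lem2} finishes the proof.

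The only non-routine step is the expansion at $x=pt-1$. Unlike the $x=pt$ case (where \eqref{ptkptk} is a clean product manipulation), here the expansion of $\prod_{j\ge 2}\bigl(1+(pt-p^2t^2)/(j(j-1))\bigr)$ produces a mixed second-order contribution in $p^2t^2$ and in $p\cdot pt$. The telescoping $\sum_{j=2}^{k}1/(j(j-1))=1-1/k$ is exactly what is needed to force this contribution to collapse into the clean form $-(-1)^kp^2t^2/k^2$, at which point the remainder of the argument is parallel to the $x=pt$ case and Lemma~\ref{th2lem2} does the rest.
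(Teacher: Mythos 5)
Your proof is correct and follows essentially the same route as the paper's: expand the binomial products modulo $p^3$ at $x=pt$ and $x=pt-1$, and then evaluate the resulting sums with Lemma~\ref{th2lem2}. The only cosmetic difference is in how the key congruence $\binom{pt-1}{k}\binom{pt-1+k}{k}\equiv(-1)^k\left(\frac{pt}{k}-\frac{p^2t^2}{k^2}\right)\pmod{p^3}$ is obtained: you expand $\prod_{j=1}^{k}\bigl(x(x+1)-j(j-1)\bigr)$ directly with the telescoping sum, whereas the paper writes $\binom{pt-1+k}{k}=\frac{pt}{pt+k}\binom{pt+k}{k}$ and reuses the computation behind \eqref{ptkptk}; both yield exactly the paper's congruence \eqref{pt-1k}, after which the two arguments coincide.
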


\begin{proof}
We first prove \eqref{th2lem3eq1}. In light of \eqref{ptkptk},
\begin{align*}
G_{p-1}(pt)&=\sum_{k=0}^{p-1}\f{\binom{pt}{k}\binom{pt+k}{k}(-2)^k}{\binom{2k}{k}}\\
&\eq1-\sum_{k=1}^{p-1}\f{2^k}{\binom{2k}{k}}\l(\f{pt}{k}+\f{p^2t^2}{k^2}\r)\\
&=1-pt\sum_{k=1}^{p-1}\f{2^k}{k\binom{2k}{k}}-p^2t^2\sum_{k=1}^{p-1}\f{2^k}{k^2\binom{2k}{k}}\pmod{p^2}.
\end{align*}
Then \eqref{th2lem3eq1} follows from Lemma \ref{th2lem2} at once.

We now prove \eqref{th2lem3eq2}. From the proof of Lemma \ref{th2lem2}, we have for $k\in\{1,2,\ldots,p-1\}$,
\begin{equation}\label{pt-1k}
\binom{pt-1}{k}\binom{pt-1+k}{k}=\f{pt}{pt+k}\binom{pt-1}{k}\binom{pt+k}{k}\eq(-1)^k\l(\f{pt}{k}-\f{p^2t^2}{k^2}\r)\pmod{p^3}.
\end{equation}
Therefore,
\begin{align*}
G_{p-1}(pt-1)&=\sum_{k=0}^{p-1}\f{\binom{pt-1}{k}\binom{pt-1+k}{k}(-2)^k}{\binom{2k}{k}}\\
&\eq1+pt\sum_{k=1}^{p-1}\f{2^k}{k\binom{2k}{k}}-p^2t^2\sum_{k=1}^{p-1}\f{2^k}{k^2\binom{2k}{k}}\pmod{p^2}.
\end{align*}
Making use of Lemma \ref{th2lem2}, we obtain \eqref{th2lem3eq2}.
\end{proof}

\medskip

\noindent{\it Proof of Theorem \ref{-2}}. We divide the proof into two cases according to the parity of $\<x\>_p$.

\medskip

{\bf Case 1.} $\<x\>_p$ is even.

If $\<x\>_p=0$, then $x=pt$. Combining \eqref{th2lem3eq1} and \eqref{Ep-20}, we obtain \eqref{-2eq}.

Now we suppose that $\<x\>_p\neq0$. In view of Lemma \ref{th2lem1} with $n=p-1$ and Lemma \ref{ZHSunmpt}, we achieve that
\begin{align}\label{-2key1}
&G_{p-1}(x)-(-1)^{\<x\>_p/2}G_{p-1}(pt)\notag\\
=&\sum_{k=0}^{(\<x\>_p-2)/2}(-1)^k(G_{p-1}(x-2k-2)+G_{p-1}(x-2k))\notag\\
=&\ \f{2^p}{\binom{2p-2}{p-1}}\sum_{k=0}^{(\<x\>_p-2)/2}(-1)^k\binom{\<x\>_p-2k+pt-1}{p-1}\binom{\<x\>_p-2k-1+p(t+1)-1}{p-1}\notag\\
\eq&\ \f{2^pp^2t(t+1)}{\binom{2p-2}{p-1}}\sum_{k=0}^{(\<x\>_p-2)/2}\f{(-1)^k}{(\<x\>_p-2k)(\<x\>_p-2k-1)}\notag\\
\eq&\ -2(-1)^{\<x\>_p/2}pt(t+1)\sum_{k=1}^{\<x\>_p/2}\f{(-1)^k}{2k(2k-1)}\notag\\
=&\ -2(-1)^{\<x\>_p/2}pt(t+1)\l(\sum_{k=1}^{\<x\>_p/2}\f{(-1)^k}{2k-1}-\f12\sum_{k=1}^{\<x\>_p/2}\f{(-1)^k}{k}\r)\pmod{p^2}.
\end{align}
By Lemma \ref{ZHSunsum} with $n=\<x\>_p/2+1,\ m=1,\ r=1,\ s=p-2$, we get
\begin{align}\label{-2key1-1}
\sum_{k=1}^{\<x\>_p/2}\f{(-1)^k}{k}&\eq\sum_{k=0}^{\<x\>_p/2}(-1)^k k^{p-2}\notag\\
&=-\f12\l((-1)^{(\<x\>_p+2)/2}E_{p-2}\l(\f{\<x\>_p+2}{2}\r)-E_{p-2}(0)\r)\notag\\
&\eq-\f12\l((-1)^{\<x\>_p/2}E_{p-2}\l(-\f{\<x\>_p}{2}\r)+2q_p(2)\r)\notag\\
&\eq-q_p(2)-\f12(-1)^{\<x\>_p/2}E_{p-2}\l(-\f{x}{2}\r)\pmod{p}.
\end{align}
Similarly, putting $n=\<x\>_p,\ m=2,\ r=1,\ s=p-2$ in Lemma \ref{ZHSunsum}, we have
\begin{align}\label{-2key1-2}
\sum_{k=1}^{\<x\>_p/2}\f{(-1)^k}{2k-1}&\eq-\sum_{\substack{k=0\\ k\eq1\pmod2}}^{\<x\>_p-1}(-1)^{(k-1)/2} k^{p-2}\notag\\
&=\f{2^{p-2}}{2}\l((-1)^{\<x\>_p/2}E_{p-2}\l(\f{\<x\>_p+1}{2}\r)-E_{p-2}\l(\f12\r)\r)\notag\\
&\eq\ \f14(-1)^{\<x\>_p/2}E_{p-2}\l(\f{x+1}{2}\r)\pmod{p},
\end{align}
where in the last step we have used the fact $E_n(1/2)=0$ for any positive odd integer $n$. Substituting \eqref{-2key1-1} and \eqref{-2key1-2} into \eqref{-2key1}, we arrive at
\begin{align*}
&G_{p-1}(x)-(-1)^{\<x\>_p/2}G_{p-1}(pt)\\
\eq&\ -(-1)^{\<x\>_p/2}pt(t+1)q_p(2)-\f{p t(t+1)}{2}\l(E_{p-2}\l(\f{x+1}{2}\r)+E_{p-2}\l(-\f x2\r)\r)\pmod{p^2}.
\end{align*}
This, together with \eqref{th2lem3eq1} gives \eqref{-2eq}.

\medskip

{\bf Case 2.} $\<x\>_p$ is odd.

In view of Lemma \ref{th2lem1} with $n=p-1$, we obtain
\begin{align}\label{-2key2}
&G_{p-1}(x)-(-1)^{(\<x\>_p+1)/2}G_{p-1}(pt-1)\notag\\
=&\sum_{k=0}^{(\<x\>_p-1)/2}(-1)^k(G_{p-1}(x-2k-2)+G_{p-1}(x-2k))\notag\\
=&\f{2^p}{\binom{2p-2}{p-1}}\sum_{k=0}^{(\<x\>_p-1)/2}(-1)^k\binom{\<x\>_p-2k+pt-1}{p-1}\binom{\<x\>_p-2k-1+p(t+1)-1}{p-1}\notag\\
=&(-1)^{(\<x\>_p-1)/2}\f{2^p}{\binom{2p-2}{p-1}}\binom{pt}{p-1}\binom{pt+p-1}{p-1}\notag\\
&+\f{2^p}{\binom{2p-2}{p-1}}\sum_{k=0}^{(\<x\>_p-3)/2}(-1)^k\binom{\<x\>_p-2k+pt-1}{p-1}\binom{\<x\>_p-2k-1+p(t+1)-1}{p-1}.
\end{align}
By \eqref{ptkptk}, we have
\begin{align}\label{-2key2-1}
&(-1)^{(\<x\>_p-1)/2}\f{2^p}{\binom{2p-2}{p-1}}\binom{pt}{p-1}\binom{pt+p-1}{p-1}\notag\\
\eq&\ (-1)^{(\<x\>_p+1)/2}\f{2^p}{\binom{2p-2}{p-1}}\l(\f{pt}{p-1}+\f{p^2t^2}{(p-1)^2}\r)\notag\\
=&\ (-1)^{(\<x\>_p+1)/2}\f{2^{p+1}(2p-1)}{\binom{2p}{p}}\l(\f{t}{p-1}+\f{pt^2}{(p-1)^2}\r)\notag\\
\eq&\ (-1)^{(\<x\>_p+1)/2}\l(2t-2p t(t+1)+2p tq_p(2)\r)\pmod{p^2}.
\end{align}
Moreover, by Lemma \ref{ZHSunmpt} we get
\begin{align}
&\f{2^p}{\binom{2p-2}{p-1}}\sum_{k=0}^{(\<x\>_p-3)/2}(-1)^k\binom{\<x\>_p-2k+pt-1}{p-1}\binom{\<x\>_p-2k-1+p(t+1)-1}{p-1}\notag\\
\eq&\ \f{2^pp^2t(t+1)}{\binom{2p-2}{p-1}}\sum_{k=0}^{(\<x\>_p-3)/2}\f{(-1)^k}{(\<x\>_p-2k)(\<x\>_p-2k-1)}\notag\\
\eq&\ (-1)^{(\<x\>_p+1)/2} 2p t(t+1)\sum_{k=1}^{(\<x\>_p-1)/2}\f{(-1)^k}{2k(2k+1)}\notag\\
=&\ (-1)^{(\<x\>_p+1)/2} 2p t(t+1)\l(\f12\sum_{k=1}^{(\<x\>_p-1)/2}\f{(-1)^k}{k}-\sum_{k=1}^{(\<x\>_p-1)/2}\f{(-1)^k}{2k+1}\r)\pmod{p^2}\label{-2key2-2}.
\end{align}
Via similar arguments of \eqref{-2key1-1} and \eqref{-2key1-2}, we have
\begin{align}
\sum_{k=1}^{(\<x\>_p-1)/2}\f{(-1)^k}{k}&\eq-q_p(2)-\f12(-1)^{(\<x\>_p+1)/2}E_{p-2}\l(\f{x+1}{2}\r)\pmod{p},\label{-2key2-3}\\
\sum_{k=1}^{(\<x\>_p-1)/2}\f{(-1)^k}{2k+1}&\eq\f14(-1)^{(\<x\>_p+1)/2}E_{p-2}\l(-\f{x}{2}\r)-1\pmod{p}.\label{-2key2-4}
\end{align}
Combining \eqref{-2key2-1}--\eqref{-2key2-4}, we arrive at
\begin{align*}
&G_{p-1}(x)-(-1)^{(\<x\>_p+1)/2}G_{p-1}(pt-1)\\
\eq&(-1)^{(\<x\>_p+1)/2}\l(2t-pt(t-1)q_p(2)-\f{p t(t+1)}2\l(E_{p-2}\l(\f{x+1}{2}\r)+E_{p-2}\l(-\f{x}{2}\r)\r)\r)\pmod{p^2}.
\end{align*}
This, together with \eqref{th2lem3eq2}, gives \eqref{-2eq}.

In view of the above, the proof of Theorem \ref{-2} is now complete.\qed

\medskip

\section{Proof of Theorem \ref{2k+1-2}}

For $n\in\N$ and $x\in\C$, let
$$
H_n(x)=\sum_{k=0}^n\f{(2x+1)\binom{x}{k}\binom{x+k}{k}(-2)^k}{(2k+1)\binom{2k}{k}}.
$$

\begin{lemma}\label{th3lem1}
For $n\in\N$ and $x\in\C$, we have
$$
H_{n}(x)+H_n(x+2)=\f{(-1)^n2^{n+1}(2x+3)\binom{x+1}{n}\binom{x+1+n}{n}}{(2n+1)\binom{2n}{n}}.
$$
\end{lemma}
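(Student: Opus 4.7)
The plan is to mirror the induction argument used in the proof of Lemma \ref{2k+1-4id}. Set $S_n = H_n(x) + H_n(x+2)$ and let $T_n$ denote the claimed right-hand side; it then suffices to verify that $S_0 = T_0$ and that $S_n - S_{n-1} = T_n - T_{n-1}$ for every $n \geq 1$.

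The base case is immediate: $H_0(x) = 2x+1$ and $H_0(x+2) = 2x+5$, so $S_0 = 4x+6 = 2(2x+3) = T_0$.

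For the inductive step, $S_n - S_{n-1}$ is simply the sum of the $k = n$ contributions from $H_n(x)$ and $H_n(x+2)$, namely
\begin{equation*}
S_n - S_{n-1} = \f{(-2)^n}{(2n+1)\binom{2n}{n}}\l[(2x+1)\binom{x}{n}\binom{x+n}{n} + (2x+5)\binom{x+2}{n}\binom{x+2+n}{n}\r].
\end{equation*}
Applying the shift identity $\binom{x+2}{n}\binom{x+2+n}{n} = \f{(x+2+n)(x+1+n)}{(x+2-n)(x+1-n)}\binom{x}{n}\binom{x+n}{n}$ on the $S$-side, and on the $T$-side combining $\binom{x+1}{n}\binom{x+1+n}{n} = \f{x+1+n}{x+1-n}\binom{x}{n}\binom{x+n}{n}$ with the analogous expression for $\binom{x+1}{n-1}\binom{x+n}{n-1}$ and the elementary ratio $(2n-1)\binom{2n-2}{n-1} = n\binom{2n}{n}/2$, one expresses both $S_n - S_{n-1}$ and $T_n - T_{n-1}$ in the form (rational function in $x$ and $n$) times $\binom{x}{n}\binom{x+n}{n}(-2)^n/((2n+1)\binom{2n}{n})$.

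The main obstacle is the algebraic verification that those two rational functions in fact agree. After clearing the common denominator $(x+2-n)(x+1-n)$, the identity reduces to the polynomial statement $(2x+1)(x+2-n)(x+1-n) + (2x+5)(x+2+n)(x+1+n) = 2(2x+3)\l[(x+1)(x+2) + n(n+2)\r]$, matching the numerator that arises on the $T_n - T_{n-1}$ side after parallel manipulations. This is a routine polynomial expansion entirely analogous to the simplification performed in the proof of Lemma \ref{2k+1-4id} (where the difference collapsed to $(4x^2+8x+2n+4)/(x+1-n)$ times the common factor), and once it is verified the induction closes and the lemma follows.
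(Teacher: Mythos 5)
Your proof is correct and follows essentially the same route as the paper, which omits the proof of this lemma precisely because it "can be proceed similarly to the argument of Lemma \ref{2k+1-4id}" --- i.e., the telescoping check $S_0=T_0$ and $S_n-S_{n-1}=T_n-T_{n-1}$ that you carry out. Your key polynomial identity is right: both difference quotients reduce to $2(2x+3)\bigl[(x+1)(x+2)+n(n+2)\bigr]$ over $(x+1-n)(x+2-n)$ times the common factor $(-2)^n\binom{x}{n}\binom{x+n}{n}/\bigl((2n+1)\binom{2n}{n}\bigr)$, so the induction closes.
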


\begin{proof}
We omit the proof since it can be proceed similarly to the argument of Lemma \ref{2k+1-4id} with minor modification.
\end{proof}

\begin{lemma}\label{th3lem2}
For any odd prime $p$, we have
$$
p\sum_{k=1}^{p-1}\f{2^k}{(2k+1)\binom{2k}{k}}\eq\l(\f{-1}{p}\r)-p\pmod{p^2}.
$$
\end{lemma}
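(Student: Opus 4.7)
My plan is to mirror the proof of \eqref{MTcor} in Lemma \ref{2kk} and convert the sum with the $(2k+1)\binom{2k}{k}$ denominator into one with a $j\binom{2j}{j}$ denominator, to which Lemma \ref{th3lem2}'s companion Lemma \ref{th2lem2} can be applied.

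First I would establish the elementary identity
$$
\f{1}{(2k+1)\binom{2k}{k}}=\f{2}{(k+1)\binom{2k+2}{k+1}},
$$
which follows from $\binom{2k+2}{k+1}=\f{2(2k+1)}{k+1}\binom{2k}{k}$. Multiplying both sides by $2^k$ and rewriting, this gives $\f{2^k}{(2k+1)\binom{2k}{k}}=\f{2^{k+1}}{(k+1)\binom{2k+2}{k+1}}$. Reindexing with $j=k+1$ yields
$$
\sum_{k=1}^{p-1}\f{2^k}{(2k+1)\binom{2k}{k}}=\sum_{j=2}^{p}\f{2^j}{j\binom{2j}{j}}=\sum_{j=1}^{p-1}\f{2^j}{j\binom{2j}{j}}-1+\f{2^p}{p\binom{2p}{p}}.
$$

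Next, I multiply through by $p$ and handle each piece modulo $p^2$. By Lemma \ref{th2lem2},
$$
p\sum_{j=1}^{p-1}\f{2^j}{j\binom{2j}{j}}\eq\l(\f{-1}{p}\r)-1-pq_p(2)\pmod{p^2}.
$$
For the boundary term $2^p/\binom{2p}{p}$, I use $\binom{2p}{p}\eq2\pmod{p^2}$ (indeed $\pmod{p^3}$ when $p\geq3$), which yields $\f{1}{\binom{2p}{p}}\eq\f12\pmod{p^2}$, so
$$
\f{2^p}{\binom{2p}{p}}\eq2^{p-1}=1+pq_p(2)\pmod{p^2}.
$$

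Finally I assemble:
$$
p\sum_{k=1}^{p-1}\f{2^k}{(2k+1)\binom{2k}{k}}\eq\l[\l(\f{-1}{p}\r)-1-pq_p(2)\r]-p+\l[1+pq_p(2)\r]\eq\l(\f{-1}{p}\r)-p\pmod{p^2},
$$
where the $\pm1$ and $\pm pq_p(2)$ terms cancel, giving the claim. There is no genuine obstacle here: the proof is essentially a direct analogue of the derivation of \eqref{MTcor} from \eqref{MTeq1}, with the only care needed being to verify that the $p=p$ boundary contribution is tracked to the correct precision modulo $p^2$ via the classical Wolstenholme-type estimate for $\binom{2p}{p}$.
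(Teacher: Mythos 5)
Your proposal is correct and follows essentially the same route as the paper: the identity $\f{2^k}{(2k+1)\binom{2k}{k}}=\f{2^{k+1}}{(k+1)\binom{2k+2}{k+1}}$, the reindexing $j=k+1$, Lemma \ref{th2lem2}, and the fact $\binom{2p}{p}\eq2\pmod{p^2}$ are exactly the ingredients used there. The bookkeeping of the boundary terms and the cancellation of the $\pm1$ and $\pm pq_p(2)$ contributions all check out.
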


\begin{proof}
Note that
$$
p\sum_{k=1}^{p-1}\f{2^k}{(2k+1)\binom{2k}{k}}=p\sum_{k=1}^{p-1}\f{2^{k+1}}{(k+1)\binom{2k+2}{k+1}}=p\sum_{k=1}^{p-1}\f{2^k}{k\binom{2k}{k}}+\f{2^p}{\binom{2p}{p}}-p.
$$
Then we obtain the desired result by Lemma \ref{th2lem2} and the fact $\binom{2p}{p}\eq2\pmod{p^2}$.
\end{proof}

\begin{lemma}\label{th3lem3}
For any odd prime $p$, we have
\begin{align}
H_{p-1}(pt)&\eq1+t+\l(\f{-1}{p}\r)t+pt(t+1)q_p(2)\pmod{p^2},\label{th3lem3eq1}\\
H_{p-1}(pt-1)&\eq-1+t+\l(\f{-1}{p}\r)t-pt(t-1)q_p(2)\pmod{p^2}.\label{th3lem3eq2}
\end{align}
\end{lemma}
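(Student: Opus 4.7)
The proof follows the template of Lemma~\ref{th2lem3}, the only new issue being the extra factor $(2x+1)/(2k+1)$ in $H_n$. For \eqref{th3lem3eq1} I would plug \eqref{ptkptk} into
$$H_{p-1}(pt)=(2pt+1)\sum_{k=0}^{p-1}\f{\binom{pt}{k}\binom{pt+k}{k}(-2)^k}{(2k+1)\binom{2k}{k}},$$
separate the $k=0$ contribution, and reduce modulo $p^2$ to
$$H_{p-1}(pt)\eq(2pt+1)\l(1-pt\sum_{k=1}^{p-1}\f{2^k}{(2k+1)k\binom{2k}{k}}-p^2t^2\sum_{k=1}^{p-1}\f{2^k}{(2k+1)k^2\binom{2k}{k}}\r)\pmod{p^2}.$$
Neither inner sum appears directly in our lemmas, but the partial fractions
$$\f{1}{(2k+1)k}=\f{1}{k}-\f{2}{2k+1},\qquad \f{1}{(2k+1)k^2}=\f{1}{k^2}-\f{2}{k}+\f{4}{2k+1}$$
express them as linear combinations of the three standard sums
$$p\sum_{k=1}^{p-1}\f{2^k}{k\binom{2k}{k}},\qquad p\sum_{k=1}^{p-1}\f{2^k}{k^2\binom{2k}{k}},\qquad p\sum_{k=1}^{p-1}\f{2^k}{(2k+1)\binom{2k}{k}},$$
whose values modulo $p^2$ (and modulo $p$ for the coefficient of $p^2t^2$, where one power of $p$ is already absorbed) are supplied by Lemmas~\ref{th2lem2} and~\ref{th3lem2}. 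Expanding the product with $(2pt+1)$ modulo $p^2$ and collecting terms then produces the right-hand side of \eqref{th3lem3eq1}.

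For \eqref{th3lem3eq2} the same calculation goes through with \eqref{pt-1k} replacing \eqref{ptkptk} and with the prefactor $(2pt-1)$ in place of $(2pt+1)$. The only substantive changes are a sign on the $p^2t^2/k^2$ contribution (coming from the minus sign in \eqref{pt-1k}) and on the prefactor; feeding the same three evaluations through then yields $-1+t+\l(\f{-1}{p}\r)t-pt(t-1)q_p(2)$.

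The main obstacle is just sign and order bookkeeping: one must track the signs through the $(-1)^k$, $(-2)^k$, $(2x\pm1)$ and $\l(\f{-1}{p}\r)$ factors, and be careful to keep the $p^2t^2$ inner sum only modulo $p$ while the $pt$ inner sum is kept to full modulo $p^2$ accuracy. No new ideas beyond those already used in Lemma~\ref{th2lem3} are required; Lemma~\ref{th3lem2} is the single extra ingredient that handles the $2k+1$ denominator.
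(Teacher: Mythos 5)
Your proposal is correct and follows essentially the same route as the paper: reduce via \eqref{ptkptk} and \eqref{pt-1k}, use the partial fractions $\tfrac{1}{(2k+1)k}=\tfrac1k-\tfrac{2}{2k+1}$ and $\tfrac{1}{(2k+1)k^2}=\tfrac1{k^2}-\tfrac2k+\tfrac4{2k+1}$ to fall back on Lemmas \ref{th2lem2} and \ref{th3lem2}, and expand the prefactor $(2pt\pm1)$ modulo $p^2$. The only detail worth noting is that after expansion the cross terms $\mp2p^2t^2\sum 2^k/(k\binom{2k}{k})$ and $\pm4p^2t^2\sum 2^k/((2k+1)\binom{2k}{k})$ coming from the two partial-fraction decompositions cancel, which is exactly the "collecting terms" you describe.
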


\begin{proof}
We first consider \eqref{th3lem3eq1}. In view of \eqref{ptkptk},
\begin{align*}
H_{p-1}(pt)&=\sum_{k=0}^{p-1}\f{(2p t+1)\binom{pt}{k}\binom{pt+k}{k}(-2)^k}{(2k+1)\binom{2k}{k}}\\
&\eq1+2p t-(1+2p t)\sum_{k=1}^{p-1}\f{2^k}{(2k+1)\binom{2k}{k}}\l(\f{pt}{k}+\f{p^2t^2}{k^2}\r)\\
&\eq1+2p t-pt\sum_{k=1}^{p-1}\f{2^k}{k\binom{2k}{k}}+2p t\sum_{k=1}^{p-1}\f{2^k}{(2k+1)\binom{2k}{k}}-p^2t^2\sum_{k=1}^{p-1}\f{2^k}{k^2\binom{2k}{k}}\pmod{p^2}.
\end{align*}
With the help of Lemmas \ref{th2lem2} and \ref{th3lem2}, we obtain \eqref{th3lem3eq1}.

We now consider \eqref{th3lem3eq2}. By \eqref{pt-1k},
\begin{align*}
H_{p-1}(pt-1)&=\sum_{k=0}^{p-1}\f{(2p t-1)\binom{pt-1}{k}\binom{pt-1+k}{k}(-2)^k}{(2k+1)\binom{2k}{k}}\\
&\eq-1+2p t+(2p t-1)\sum_{k=1}^{p-1}\f{2^k}{(2k+1)\binom{2k}{k}}\l(\f{pt}{k}-\f{p^2t^2}{k^2}\r)\\
&\eq-1+2p t-pt\sum_{k=1}^{p-1}\f{2^k}{k\binom{2k}{k}}+2p t\sum_{k=1}^{p-1}\f{2^k}{(2k+1)\binom{2k}{k}}+p^2t^2\sum_{k=1}^{p-1}\f{2^k}{k^2\binom{2k}{k}}\pmod{p^2}.
\end{align*}
By Lemmas \ref{th2lem2} and \ref{th3lem2}, we get \eqref{th3lem3eq2}.
\end{proof}

\medskip

\noindent{\it Proof of Theorem \ref{2k+1-2}}. We divide the proof into two cases according to the parity of $\<x\>_p$.

\medskip

{\bf Case 1.} $\<x\>_p$ is even.

If $\<x\>_p=0$, then $x=pt$. Combining \eqref{th3lem3eq1} and \eqref{Ep-20}, we obtain \eqref{2k+1-2eq}.

Suppose that $\<x\>_p\neq0$. By Lemma \ref{th3lem1} with $n=p-1$ and Lemma \ref{ZHSunmpt}, we get
\begin{align}\label{2k+1-2key1}
&H_{p-1}(x)-(-1)^{\<x\>_p/2}H_{p-1}(pt)\notag\\
=&\sum_{k=0}^{(\<x\>_p-2)/2}(-1)^k(H_{p-1}(x-2k-2)+H_{p-1}(x-2k))\notag\\
=&\ \f{2^p}{(2p-1)\binom{2p-2}{p-1}}\sum_{k=0}^{(\<x\>_p-2)/2}(-1)^k(2x-4k-1)\binom{\<x\>_p-2k+pt-1}{p-1}\binom{\<x\>_p-2k-1+p(t+1)-1}{p-1}\notag\\
\eq&\ \f{2^pp^2t(t+1)}{(2p-1)\binom{2p-2}{p-1}}\sum_{k=0}^{(\<x\>_p-2)/2}\f{(-1)^k(2\<x\>_p-4k-1)}{(\<x\>_p-2k)(\<x\>_p-2k-1)}\notag\\
\eq&\ 2(-1)^{\<x\>_p/2}pt(t+1)\sum_{k=1}^{\<x\>_p/2}\f{(-1)^k(4k-1)}{2k(2k-1)}\notag\\
=&\ 2(-1)^{\<x\>_p/2}pt(t+1)\l(\f12\sum_{k=1}^{\<x\>_p/2}\f{(-1)^k}{k}+\sum_{k=1}^{\<x\>_p/2}\f{(-1)^k}{2k-1}\r)\pmod{p^2}.
\end{align}
Substituting \eqref{-2key1-1} and \eqref{-2key1-2} into \eqref{2k+1-2key1}, we arrive at
\begin{align*}
&H_{p-1}(x)-(-1)^{\<x\>_p/2}H_{p-1}(pt)\\
\eq&\ -(-1)^{\<x\>_p/2}pt(t+1)q_p(2)-\f{pt(t+1)}2\l(E_{p-2}\l(-\f x2\r)-E_{p-2}\l(\f{x+1}{2}\r)\r)\pmod{p^2}.
\end{align*}
This, together with \eqref{th3lem3eq1} gives \eqref{2k+1-2eq}.

\medskip

{\bf Case 2.} $\<x\>_p$ is odd.

In view of Lemma \ref{th3lem1} with $n=p-1$, we obtain
\begin{align}\label{2k+1-2key2}
&H_{p-1}(x)-(-1)^{(\<x\>_p+1)/2}H_{p-1}(pt-1)\notag\\
=&\sum_{k=0}^{(\<x\>_p-1)/2}(-1)^k(H_{p-1}(x-2k-2)+H_{p-1}(x-2k))\notag\\
=&\f{2^p}{(2p-1)\binom{2p-2}{p-1}}\sum_{k=0}^{(\<x\>_p-1)/2}(-1)^k(2x-4k-1)\binom{\<x\>_p-2k+pt-1}{p-1}\binom{\<x\>_p-2k-1+p(t+1)-1}{p-1}\notag\\
=&(-1)^{(\<x\>_p-1)/2}\f{2^p(2p t+1)}{(2p-1)\binom{2p-2}{p-1}}\binom{pt}{p-1}\binom{pt+p-1}{p-1}\notag\\
&+\f{2^p}{(2p-1)\binom{2p-2}{p-1}}\sum_{k=0}^{(\<x\>_p-3)/2}(-1)^k(2x-4k-1)\binom{\<x\>_p-2k+pt-1}{p-1}\binom{\<x\>_p-2k-1+p(t+1)-1}{p-1}.
\end{align}
By \eqref{ptkptk}, we have
\begin{align}\label{2k+1-2key2-1}
&(-1)^{(\<x\>_p-1)/2}\f{2^p(2p t+1)}{(2p-1)\binom{2p-2}{p-1}}\binom{pt}{p-1}\binom{pt+p-1}{p-1}\notag\\
\eq&\ (-1)^{(\<x\>_p+1)/2}\f{2^p(2p t+1)}{(2p-1)\binom{2p-2}{p-1}}\l(\f{pt}{p-1}+\f{p^2t^2}{(p-1)^2}\r)\notag\\
=&\ (-1)^{(\<x\>_p+1)/2}\f{2^{p+1}(2 p t+1)}{\binom{2p}{p}}\l(\f{t}{p-1}+\f{pt^2}{(p-1)^2}\r)\notag\\
\eq&\ (-1)^{(\<x\>_p-1)/2}\l(2t+2p t(t+1)+2p tq_p(2)\r)\pmod{p^2}.
\end{align}
Moreover, by Lemma \ref{ZHSunmpt} we get
\begin{align}
&\f{2^p}{(2p-1)\binom{2p-2}{p-1}}\sum_{k=0}^{(\<x\>_p-3)/2}(-1)^k(2x-4k-1)\binom{\<x\>_p-2k+pt-1}{p-1}\binom{\<x\>_p-2k-1+p(t+1)-1}{p-1}\notag\\
\eq&\ \f{2^pp^2t(t+1)}{(2p-1)\binom{2p-2}{p-1}}\sum_{k=0}^{(\<x\>_p-3)/2}\f{(-1)^k(2\<x\>_p-4k-1)}{(\<x\>_p-2k)(\<x\>_p-2k-1)}\notag\\
\eq&\ (-1)^{(\<x\>_p-1)/2} 2p t(t+1)\sum_{k=1}^{(\<x\>_p-1)/2}\f{(-1)^k(4k+1)}{2k(2k+1)}\notag\\
=&\ (-1)^{(\<x\>_p-1)/2} 2p t(t+1)\l(\f12\sum_{k=1}^{(\<x\>_p-1)/2}\f{(-1)^k}{k}+\sum_{k=1}^{(\<x\>_p-1)/2}\f{(-1)^k}{2k+1}\r)\pmod{p^2}\label{2k+1-2key2-2}.
\end{align}
Combining \eqref{2k+1-2key2}--\eqref{2k+1-2key2-2}, \eqref{-2key2-3} and \eqref{-2key2-4}, we arrive at
\begin{align*}
&H_{p-1}(x)-(-1)^{(\<x\>_p+1)/2}H_{p-1}(pt-1)\\
\eq&(-1)^{(\<x\>_p-1)/2}\l(2t-pt(t-1)q_p(2)-\f{p t(t+1)}2\l(E_{p-2}\l(-\f{x}{2}\r)-E_{p-2}\l(\f{x+1}{2}\r)\r)\r)\pmod{p^2}.
\end{align*}
This, together with \eqref{th3lem3eq2}, gives \eqref{2k+1-2eq}.

In view of the above, the proof of Theorem \ref{2k+1-2} is now complete.\qed

\begin{Ack}
This work is supported by the National Natural Science Foundation of China (grant 12201301).
\end{Ack}

\end{document}